\let\csname equation*\endcsname\relax
\let\csname endequation*\endcsname\relax
\newcommand{\innerprod}[2]{\langle #1, #2 \rangle}
\newtheorem{theorem}{Theorem}[section]
\newtheorem{lemma}[theorem]{Lemma}
\newtheorem{proposition}{Proposition}
\newtheorem{remark}{Remark}
\newcommand{\SpaceTime}[3]{\begin{minipage}[t]{.32\textwidth}
\begin{center}
\small$\alpha=#2$#3\\[.5mm]
\rotatebox{90}{\hspace{1.4cm}\footnotesize Time}\hspace{-1mm}
\rotatebox{90}{$\xrightarrow{\hspace{3.8cm}}$}\hspace{-.5mm}
$\underrightarrow{\includegraphics[width=.82\textwidth]{#1}~~}$\\
\footnotesize Positions
\end{center}
\end{minipage}}
\begin{document}

\title[Stability of a stochastic port-Hamiltonian car-following model]{Stability analysis of a stochastic port-Hamiltonian car-following model}

\author{Barbara R\"udiger}
\address{Group for Stochastic, IMACM, University of Wuppertal, Germany}
\ead{ruediger@uni-wuppertal.de}

\author{Antoine Tordeux}
\address{Group for Traffic Safety and Reliability, IMACM, University of Wuppertal, Germany}
\ead{tordeux@uni-wuppertal.de}

\author{Baris E. Ugurcan}
\address{Group for Stochastic, IMACM, University of Wuppertal, Germany}
\ead{beu4@cornell.edu, ugurcan@uni-wuppertal.de}

\vspace{10pt}
\begin{indented}
\item[]June 2024
\end{indented}

\begin{abstract}
Port-Hamiltonian systems are pertinent representations of many nonlinear physical systems. In this study, we formulate and analyse a general class of stochastic car-following models with a systematic port-Hamiltonian structure. The model class is a generalisation of classical car-following approaches, including the \textit{optimal velocity model} of Bando et al. (1995), the \textit{full velocity difference model} of Jiang et al. (2001), and recent stochastic following models based on the Ornstein-Uhlenbeck process. In contrast to traditional models where the interaction is totally asymmetric (i.e., depending only on the speed and distance to the predecessor), the port-Hamiltonian car-following model also depends on the distance to the follower. We determine the exact stability condition of the finite system with $N$ vehicles and periodic boundaries. The stable system is ergodic with a unique Gaussian invariant measure. Other properties of the model are studied using numerical simulation. It turns out that the Hamiltonian component improves the flow stability and reduces the total energy in the system. Furthermore, it prevents the problematic formation of stop-and-go waves with oscillatory dynamics, even in the presence of stochastic perturbations.\\[2mm]
{\bf Keywords:} Single-file motion, stochastic car-following model, stochastic port-Hamiltonian system, Ornstein-Uhlenbeck process, stability analysis
\end{abstract}


\section{Introduction} 

The stability of single-file motion in one dimension is an active research area in traffic engineering. 
It relies on the collective behaviour of vehicle platoons and the formation of stop-and-go waves in traffic flows. 
Pioneering work by Pipes and later Herman, Gazis, and other authors has shown since the 1950s that car-following models can present stability issues \cite{Pipes1953,Chandler1958,Herman1959,gazis1961nonlinear}. 
Nowadays, it is widely accepted that waves occur spontaneously when the driver's reaction time exceeds a critical threshold due to instability phenomena \cite{bando1995dynamical,Orosz2009,Orosz2010}. 
The waves result from a linear instability inducing a first-order phase transition in a deterministic nonlinear framework. 
Another modelling approach originally based on cellular automata and dating back to the 1990s assumes that the waves result from noise-induced effects \cite{NagelS92A,barlovic1998metastable}. 
Currently, continuous noise models, and in particular overdamped Brownian noise provided by the Ornstein-Uhlenbeck process, can initiate stop-and-go phenomena as second-order phase transitions and subcritical instabilities \cite{treiber2009hamilton,Hamdar2015,tordeux2016white,treiber2017intelligent,wang2020stability,friesen2021spontaneous}. 
Other continuous approaches rely on state-dependent noise using the Cox-Ingersoll-Ross process, which induces instability phenomena \cite{ngoduy2019langevin,xu2019analysis}.

Although traffic flow stability has been studied for up to 70 years, the spontaneous formation and control of stop-and-go waves remains not fully understood, either experimentally or theoretically. 
This topic remains a challenge, especially for driving automation. 
Recent experiments have shown that the platoon dynamics of vehicles equipped with adaptive cruise control (ACC) driving assistance systems currently available on the market exhibit unstable characteristics \cite{Stern2018,gunter2020commercially,makridis2021openacc,CIUFFO2021}.
Considerable efforts are currently being made to ensure that car-following models and ACC systems are not only efficient, but also stable and robust to perturbations. 
Stabilisation properties can be achieved through temporal and spatial anticipation mechanisms, and other compensators \cite{treiber2006delays,wang2019effect,khound2021extending}. 
Autonomous sensing systems rely on next-neighbour interaction, while connected, cooperative systems allow for larger interaction ranges. 

Port-Hamiltonian systems (pHS) have recently been introduced for the modelling of nonlinear physical systems \cite{van2006port,van2014port}. 
The framework dates back to the 1980s and the pioneering work of Arjan van der Schaft and Bernhard Maschke on dissipative Hamiltonian systems including inputs and outputs \cite{van1981symmetries,MaschkeVdsBreedveld}. 
Unlike conservative Hamiltonian systems, pHS incorporate control and external factors into the dynamics through the ports.
The modelling approach also enables direct computation of the system output and Hamiltonian behaviour. 
Systems from various physical fields can be formulated as pHS, including thermodynamics, electromechanics, electromagnetics, fluid mechanics, hydrodynamics, or multi-body systems \cite{rashad2020twenty}.
Indeed, the functional structure of pHS, mitigating the modelling between conserved quantities, dissipation, input, and output, is a meaningful representation of many physical systems.

Recent studies point out that pHS frameworks are also relevant modelling approaches for multi-agent systems \cite{knorn2015overview}.  
Port-Hamiltonian multi-agent systems appear, for instance, in reliability engineering, for complex mechanical systems \cite{wang2016output,cristofaro2022fault}, consensus and opinion formation \cite{van2010Consensus,chang2014protocol,jafarian2015formation,WEI2017Consensus,xue2019opinion}, multi-input multi-output (MIMO) multi-agent systems \cite{sharf2019analysis}, swarm behaviour \cite{matei2019inferring,mavridis2020detection}, or autonomous vehicles, e.g., path-tracking \cite{ma2021path} or for modelling and safety analysis of adaptive cruise control systems \cite{knorn2014passivity,knorn2014scalability,dai2017safety,dai2020safety}. 
These microscopic agent-based modelling approaches are based on pHS using ordinary, stochastic, or delayed differential equations. 
Macroscopic traffic flow models \cite{bansal2021port} or, more generally, fluid dynamics models \cite{clemente2002geometric,rashad2021port,rashad2021portb} rely on infinite-dimensional pHS through partial differential equations using Stokes-Dirac bond graph representations.
In all the modelling approaches, the Hamiltonian quantifies the total energy in the system. 

In this article, we identify and analyse a general class of stochastic car-following models by second-order differential equations that can be formulated as stochastic input-state-output port-Hamiltonian systems.  
In contrast to classical car-following models, where the interaction depends only on the predecessor and is totally asymmetric, the vehicle dynamics in the pHS framework also depend on the distance to the follower. 
In Sec.~\ref{defMod}, we define the car-following model and its port-Hamiltonian formulation.
We determine sufficient stability conditions for the finite system with periodic boundary conditions in Sec.~\ref{stab} and point out that the stable system is ergodic and weakly converges to limiting solutions with unique invariant Gaussian measure. 
Some simulations are presented in Sec.~\ref{sim}. 
The presence of noise-induced waves is characterised in the second order using vehicle speed autocorrelation functions.
The simulation results show that the Hamiltonian component stabilises the system, reducing the total energy and the formation of stop-and-go dynamics.
Sec.~\ref{ccl} provides a summary of the results and some concluding remarks, in particular regarding the application of the model to adaptive cruise control systems.

\section{Stochastic port-Hamiltonian car-following model\label{defMod}}
\subsection{Notation}
In the following, we consider $N=3,4,\ldots$ vehicles on a segment of length $L$ with periodic boundary conditions (see Figure~\ref{fig:SystemScheme}). 
The periodic boundary conditions correspond to roundabout traffic, which has been widely studied experimentally in the literature \cite{Sugiyama2008,Stern2018}.
The periodic geometry facilitates the assessment of the global stability of the vehicle line. 
While local stability is concerned with individual platoons, global stability includes advective and convective perturbations that dissipate in a local context \cite{Orosz2009,Orosz2010}.
We denote by
\begin{itemize}
    \item $q(t)=\bigl(q_n(t)\bigr)_{n=1}^N\in\mathbb R^N$ the curvilinear positions of the vehicles and
    \item $p(t)=\bigl(p_n(t)\bigr)_{n=1}^N\in\mathbb R^N$ their momentum.
    \end{itemize}
Throughout this article, we assume that the vehicles have a normalised mass $m=1$ and, therefore, that momentum and speed coincide. 
We also assume that the vehicles are initially ordered by their indices, i.e.,
\begin{equation}
0\le q_1(0)\le q_2(0)\le\ldots\le q_N(0)\le L,
\end{equation}
and assume that the follower and predecessor of the $n$-th vehicle are the $(n-1)$-th and $(n+1)$-th vehicles, respectively, at any time. 
The predecessor of the $N$-th vehicle is the first vehicle and the follower of the first vehicle is the $N$-th vehicle due to the periodic boundaries.
The distances between the particles are the variables $Q(t)=\bigl(Q_n(t)\bigr)_{n=1}^N\in\mathbb R^N$ such that
\begin{equation}
\begin{cases}
~Q_n(t)=q_{n+1}(t)-q_n(t), \qquad n=1,\ldots,N-1,\\[1mm]
~Q_N(t)=L+q_1(t)-q_N(t).
\end{cases}
\end{equation}

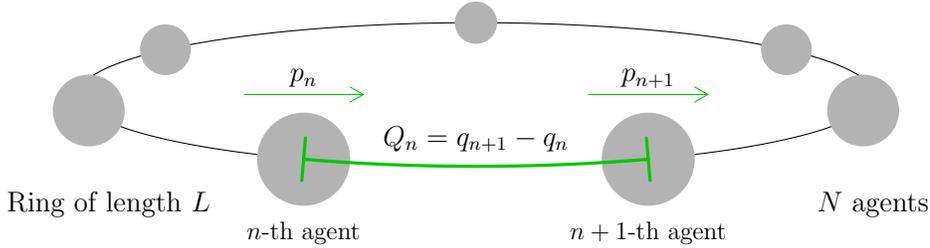
\begin{figure}[!ht]
\begin{center}\vspace{0mm}
\begin{tikzpicture}[x=1.6pt,y=1.6pt]
\definecolor{fillColor}{RGB}{255,255,255}
\path[use as bounding box,fill=fillColor,fill opacity=0.00] (0,0) rectangle (247.08, 77.21);
\begin{scope}
\path[clip] ( 5.50, 5.50) rectangle (235.08, 65.21);
\definecolor{drawColor}{RGB}{0,0,0}

\path[draw=drawColor,line width= 0.4pt,line join=round,line cap=round] (123.54, 58.14) --
	(126.50, 58.14) --
	(129.46, 58.11) --
	(132.42, 58.07) --
	(135.36, 58.01) --
	(138.30, 57.93) --
	(141.22, 57.84) --
	(144.12, 57.73) --
	(147.00, 57.60) --
	(149.86, 57.46) --
	(152.69, 57.30) --
	(155.50, 57.13) --
	(158.27, 56.94) --
	(161.00, 56.73) --
	(163.70, 56.51) --
	(166.36, 56.27) --
	(168.98, 56.02) --
	(171.55, 55.75) --
	(174.07, 55.47) --
	(176.54, 55.18) --
	(178.96, 54.87) --
	(181.33, 54.54) --
	(183.64, 54.21) --
	(185.88, 53.86) --
	(188.07, 53.49) --
	(190.19, 53.12) --
	(192.24, 52.73) --
	(194.23, 52.34) --
	(196.14, 51.93) --
	(197.99, 51.51) --
	(199.76, 51.08) --
	(201.45, 50.63) --
	(203.06, 50.19) --
	(204.60, 49.73) --
	(206.05, 49.26) --
	(207.43, 48.78) --
	(208.72, 48.30) --
	(209.92, 47.81) --
	(211.04, 47.31) --
	(212.07, 46.81) --
	(213.01, 46.30) --
	(213.87, 45.79) --
	(214.63, 45.27) --
	(215.30, 44.75) --
	(215.88, 44.22) --
	(216.37, 43.69) --
	(216.77, 43.16) --
	(217.07, 42.63) --
	(217.28, 42.09) --
	(217.40, 41.56) --
	(217.42, 41.02) --
	(217.35, 40.48) --
	(217.19, 39.95) --
	(216.93, 39.41) --
	(216.58, 38.88) --
	(216.14, 38.35) --
	(215.60, 37.82) --
	(214.98, 37.30) --
	(214.26, 36.78) --
	(213.45, 36.26) --
	(212.55, 35.75) --
	(211.57, 35.25) --
	(210.49, 34.75) --
	(209.33, 34.25) --
	(208.08, 33.77) --
	(206.75, 33.29) --
	(205.34, 32.81) --
	(203.84, 32.35) --
	(202.27, 31.90) --
	(200.61, 31.45) --
	(198.88, 31.02) --
	(197.07, 30.59) --
	(195.20, 30.18) --
	(193.24, 29.77) --
	(191.22, 29.38) --
	(189.14, 29.00) --
	(186.98, 28.63) --
	(184.77, 28.27) --
	(182.49, 27.93) --
	(180.15, 27.60) --
	(177.76, 27.28) --
	(175.31, 26.98) --
	(172.82, 26.69) --
	(170.27, 26.42) --
	(167.67, 26.16) --
	(165.04, 25.91) --
	(162.36, 25.68) --
	(159.64, 25.47) --
	(156.88, 25.27) --
	(154.10, 25.09) --
	(151.28, 24.92) --
	(148.43, 24.77) --
	(145.56, 24.64) --
	(142.67, 24.52) --
	(139.76, 24.42) --
	(136.83, 24.34) --
	(133.89, 24.27) --
	(130.94, 24.22) --
	(127.98, 24.18) --
	(125.02, 24.17) --
	(122.06, 24.17) --
	(119.09, 24.18) --
	(116.14, 24.22) --
	(113.18, 24.27) --
	(110.24, 24.34) --
	(107.32, 24.42) --
	(104.40, 24.52) --
	(101.51, 24.64) --
	( 98.64, 24.77) --
	( 95.80, 24.92) --
	( 92.98, 25.09) --
	( 90.19, 25.27) --
	( 87.44, 25.47) --
	( 84.72, 25.68) --
	( 82.04, 25.91) --
	( 79.40, 26.16) --
	( 76.81, 26.42) --
	( 74.26, 26.69) --
	( 71.76, 26.98) --
	( 69.32, 27.28) --
	( 66.92, 27.60) --
	( 64.59, 27.93) --
	( 62.31, 28.27) --
	( 60.09, 28.63) --
	( 57.94, 29.00) --
	( 55.85, 29.38) --
	( 53.83, 29.77) --
	( 51.88, 30.18) --
	( 50.00, 30.59) --
	( 48.20, 31.02) --
	( 46.47, 31.45) --
	( 44.81, 31.90) --
	( 43.24, 32.35) --
	( 41.74, 32.81) --
	( 40.33, 33.29) --
	( 38.99, 33.77) --
	( 37.75, 34.25) --
	( 36.59, 34.75) --
	( 35.51, 35.25) --
	( 34.52, 35.75) --
	( 33.63, 36.26) --
	( 32.82, 36.78) --
	( 32.10, 37.30) --
	( 31.47, 37.82) --
	( 30.94, 38.35) --
	( 30.49, 38.88) --
	( 30.14, 39.41) --
	( 29.89, 39.95) --
	( 29.72, 40.48) --
	( 29.65, 41.02) --
	( 29.68, 41.56) --
	( 29.79, 42.09) --
	( 30.00, 42.63) --
	( 30.31, 43.16) --
	( 30.70, 43.69) --
	( 31.19, 44.22) --
	( 31.78, 44.75) --
	( 32.45, 45.27) --
	( 33.21, 45.79) --
	( 34.06, 46.30) --
	( 35.01, 46.81) --
	( 36.04, 47.31) --
	( 37.16, 47.81) --
	( 38.36, 48.30) --
	( 39.65, 48.78) --
	( 41.02, 49.26) --
	( 42.48, 49.73) --
	( 44.01, 50.19) --
	( 45.63, 50.63) --
	( 47.32, 51.08) --
	( 49.09, 51.51) --
	( 50.93, 51.93) --
	( 52.85, 52.34) --
	( 54.83, 52.73) --
	( 56.89, 53.12) --
	( 59.01, 53.49) --
	( 61.19, 53.86) --
	( 63.44, 54.21) --
	( 65.75, 54.54) --
	( 68.11, 54.87) --
	( 70.53, 55.18) --
	( 73.01, 55.47) --
	( 75.53, 55.75) --
	( 78.10, 56.02) --
	( 80.72, 56.27) --
	( 83.38, 56.51) --
	( 86.07, 56.73) --
	( 88.81, 56.94) --
	( 91.58, 57.13) --
	( 94.38, 57.30) --
	( 97.22, 57.46) --
	(100.07, 57.60) --
	(102.96, 57.73) --
	(105.86, 57.84) --
	(108.78, 57.93) --
	(111.71, 58.01) --
	(114.66, 58.07) --
	(117.61, 58.11) --
	(120.57, 58.14) --
	(123.54, 58.14);
\definecolor{fillColor}{gray}{0.70}

\path[fill=fillColor] ( 82.80, 25.85) circle ( 11);

\path[fill=fillColor] (164.27, 25.85) circle ( 11);

\path[fill=fillColor] (215.07, 37.37) circle ( 8.5);

\path[fill=fillColor] (196.94, 51.75) circle (  6);

\path[fill=fillColor] (123.54, 58.14) circle (  5);

\path[fill=fillColor] ( 50.13, 51.75) circle (  6);

\path[fill=fillColor] ( 32.00, 37.37) circle ( 8.5);
\definecolor{drawColor}{RGB}{0,200,0}

\path[draw=drawColor,line width= 1.2pt,line join=round,line cap=round] ( 82.80, 25.85) --
	( 83.57, 25.78) --
	( 84.34, 25.72) --
	( 85.12, 25.65) --
	( 85.90, 25.59) --
	( 86.68, 25.53) --
	( 87.46, 25.47) --
	( 88.25, 25.41) --
	( 89.04, 25.35) --
	( 89.83, 25.30) --
	( 90.63, 25.24) --
	( 91.43, 25.19) --
	( 92.23, 25.14) --
	( 93.03, 25.09) --
	( 93.84, 25.04) --
	( 94.65, 24.99) --
	( 95.46, 24.94) --
	( 96.27, 24.90) --
	( 97.09, 24.85) --
	( 97.91, 24.81) --
	( 98.73, 24.77) --
	( 99.55, 24.73) --
	(100.37, 24.69) --
	(101.20, 24.65) --
	(102.02, 24.62) --
	(102.85, 24.58) --
	(103.69, 24.55) --
	(104.52, 24.52) --
	(105.35, 24.49) --
	(106.19, 24.46) --
	(107.03, 24.43) --
	(107.86, 24.40) --
	(108.70, 24.38) --
	(109.55, 24.35) --
	(110.39, 24.33) --
	(111.23, 24.31) --
	(112.08, 24.29) --
	(112.92, 24.27) --
	(113.77, 24.26) --
	(114.61, 24.24) --
	(115.46, 24.23) --
	(116.31, 24.22) --
	(117.16, 24.20) --
	(118.01, 24.19) --
	(118.86, 24.19) --
	(119.71, 24.18) --
	(120.56, 24.17) --
	(121.41, 24.17) --
	(122.26, 24.17) --
	(123.11, 24.16) --
	(123.96, 24.16) --
	(124.82, 24.17) --
	(125.67, 24.17) --
	(126.52, 24.17) --
	(127.37, 24.18) --
	(128.22, 24.19) --
	(129.07, 24.19) --
	(129.92, 24.20) --
	(130.77, 24.22) --
	(131.62, 24.23) --
	(132.46, 24.24) --
	(133.31, 24.26) --
	(134.16, 24.27) --
	(135.00, 24.29) --
	(135.85, 24.31) --
	(136.69, 24.33) --
	(137.53, 24.35) --
	(138.37, 24.38) --
	(139.21, 24.40) --
	(140.05, 24.43) --
	(140.89, 24.46) --
	(141.72, 24.49) --
	(142.56, 24.52) --
	(143.39, 24.55) --
	(144.22, 24.58) --
	(145.05, 24.62) --
	(145.88, 24.65) --
	(146.71, 24.69) --
	(147.53, 24.73) --
	(148.35, 24.77) --
	(149.17, 24.81) --
	(149.99, 24.85) --
	(150.81, 24.90) --
	(151.62, 24.94) --
	(152.43, 24.99) --
	(153.24, 25.04) --
	(154.04, 25.09) --
	(154.85, 25.14) --
	(155.65, 25.19) --
	(156.45, 25.24) --
	(157.24, 25.30) --
	(158.04, 25.35) --
	(158.83, 25.41) --
	(159.62, 25.47) --
	(160.40, 25.53) --
	(161.18, 25.59) --
	(161.96, 25.65) --
	(162.73, 25.72) --
	(163.51, 25.78) --
	(164.27, 25.85);

\path[draw=drawColor,line width= 1.2pt,line join=round,line cap=round] ( 82.80, 25.85) -- ( 83.57, 25.78);

\path[draw=drawColor,line width= 1.2pt,line join=round,line cap=round] ( 83.24, 30.89) --
	( 82.80, 25.85) --
	( 82.37, 20.81);

\path[draw=drawColor,line width= 1.2pt,line join=round,line cap=round] (164.27, 25.85) -- (163.51, 25.78);

\path[draw=drawColor,line width= 1.2pt,line join=round,line cap=round] (164.71, 20.81) --
	(164.27, 25.85) --
	(163.84, 30.89);
\definecolor{drawColor}{RGB}{0,0,0}


\definecolor{drawColor}{RGB}{0,200,0}

\path[draw=drawColor,line width= 0.4pt,line join=round,line cap=round] ( 68.72, 41.15) -- ( 96.89, 41.15);

\path[draw=drawColor,line width= 0.4pt,line join=round,line cap=round] ( 93.76, 39.35) --
	( 96.89, 41.15) --
	( 93.76, 42.96);
\definecolor{drawColor}{RGB}{0,0,0}

\node[text=drawColor,anchor=base,inner sep=0pt, outer sep=0pt, scale=  0.90] at ( 82.80, 44.25) {$p_n$};

\definecolor{drawColor}{RGB}{0,200,0}

\path[draw=drawColor,line width= 0.4pt,line join=round,line cap=round] (150.19, 41.15) -- (178.36, 41.15);

\path[draw=drawColor,line width= 0.4pt,line join=round,line cap=round] (175.23, 39.35) --
	(178.36, 41.15) --
	(175.23, 42.96);
\definecolor{drawColor}{RGB}{0,0,0}

\node[text=drawColor,anchor=base,inner sep=0pt, outer sep=0pt, scale=  0.90] at (164.27, 44.25) {$p_{n+1}$};

\node[text=drawColor,anchor=base,inner sep=0pt, outer sep=0pt, scale=  0.90] at (123.54, 29) {$Q_n=q_{n+1}-q_n$};

\node[text=drawColor,anchor=base,inner sep=0pt, outer sep=0pt, scale=  0.90] at ( 36.69, 13.45) {Ring of length $L$};

\node[text=drawColor,anchor=base,inner sep=0pt, outer sep=0pt, scale=  0.90] at (217.43, 13.45) {$N$ agents};

\node[text=drawColor,anchor=base,inner sep=0pt, outer sep=0pt, scale=  0.80] at ( 82.80, 7) {$n$-th agent};

\node[text=drawColor,anchor=base,inner sep=0pt, outer sep=0pt, scale=  0.80] at (164.27, 7) {$n+1$-th agent};

\end{scope}
\end{tikzpicture}\vspace{-1mm}
    \caption{Scheme of the single-file motion system with periodic boundary conditions. $q_n$ is the curvilinear position, while $Q_n = q_{n+1} - q_n$ is the spacing and $p_n$ is the momentum (speed) of the $n$-th vehicle.}
    \label{fig:SystemScheme}
\end{center}
\end{figure}

\subsection{Car-following model}
The microscopic motion model for the $n$-th vehicle is given by
\begin{equation}
   \begin{cases}
    ~dQ_n(t)=\bigl(p_{n+1}(t)-p_n(t)\bigr) dt,\\[2.5mm]
    ~dp_n(t)=\gamma \bigl(F(Q_n(t))-p_n(t)\bigr)dt+\beta \bigl(p_{n+1}(t)-p_n(t)\bigr)dt\\[1.5mm]
    \qquad\qquad\qquad\quad+\;\bigl(V'(Q_n(t))-V'(Q_{n-1}(t))\bigr)dt+\sigma dW_n(t),
    \end{cases}
    \label{modn}
\end{equation}
where $\gamma>0$, $\beta\ge0$ and assuming, according to the previous conventions, that $p_{n+1}$ is the speed of the first vehicle $p_1$ if $n=N$ and $Q_{n-1}$ is the spacing of the last vehicle $Q_{N}$ for $n=1$. 
The function $V\in C^1(\mathbb R,[0,\infty))$ is a convex distance-based interaction potential.
We use in the following the quadratic form 
\begin{equation}
V(x)=\frac\alpha2 x^2,\qquad \alpha\ge0,
\label{quadrapot}
\end{equation}
so that $V'(x)=\alpha x$. 
The Wiener processes $(W_n(t))_{n=1}^N\colon [0,\infty)\times \Omega \to \mathbb R^N$ are independent standard Brownian motions on the probability space $(\Omega,\mathcal{F},\mathbb{P})$. 
In the model, the vehicle acceleration combines three deterministic components:
\begin{enumerate}
    \item The first term $\gamma (F(Q_n)-p_n)$ relaxes the speed to a relationship $F:\mathbb R\mapsto\mathbb R_+$ between speed and spacing (parameter $\gamma>0$), usually called  in traffic engineering \textit{optimal velocity function} with reference to the \textit{optimal velocity model} by Bando et al.\ (1995) \cite{bando1995dynamical}, see \cite{tordeux2014collision,Orosz2009}.
    \item The second term $\beta (p_{n+1}-p_n)$ relaxes the speed of the considered vehicle to the speed of its predecessor (parameter $\beta\ge0$). The combination of drifts to a desired velocity relationship and to the speed of the predecessor has been first introduced by Helly in 1959 \cite{Helly1959} and later by Jiang et al.\ in 2001 with the \textit{full velocity difference model} \cite{jiang2001full}.
    \item The last term $V'(Q_n)-V'(Q_{n-1})$ uniforms the distance difference between the predecessor and the follower (potential $V$ with parameter $\alpha\ge0$); this term allows collision avoidance and is specific to the port-Hamiltonian car-following model.
\end{enumerate}
The optimal velocity function $F$ includes classical parameters of a car-following situation, such as the desired speed $v_0\ge0$, the size of the vehicle $\ell\ge0$, and the desired time gap $T>0$. A typical function is the piecewise linear form:
\begin{equation}
    F(x)=\min\Big\{v_0,\max\Big\{0,\frac{x-\ell}{T}\Big\}\Big\},
\end{equation}
which can be smoothed using mollifies, arctangent and other sigmoid functions.
Note that the homogeneous configurations where $Q_n=L/N$ and $p_n=F(L/N)$ for all $n\in\{1,\ldots,N\}$ are equilibrium solutions for the deterministic system where $\sigma=0$. 

The car-following model \eqref{modn} is a generalisation of the well-known \textit{optimal velocity model} by Bando et al.\ \cite{bando1995dynamical}, Helly's model \cite{Helly1959} and the \textit{full velocity difference model} by Jiang et al.\ \cite{jiang2001full}, and more recent stochastic models based on the Ornstein-Uhlenbeck process \cite{tordeux2016white,friesen2021spontaneous}.
In contrast to classical car-following models where the interaction depends only on the predecessor and is totally asymmetric, the dynamics also depend on the distance to the follower. 
This allows to obtain a skew symmetric structure, which is necessary for the port-Hamiltonian formulation.
The model remains a nearest-neighbour interaction model whose variables can be measured using sensors. 
This allows a direct application for autonomous adaptive cruise control systems. 

\subsection{Port-Hamiltonian formulation}
Recent results have shown that the two-dimensional Cucker-Smale swarm model can be represented as a port-Hamiltonian system \cite{matei2019inferring}. 
The formulation is conjectured for systems with $N$ agents and demonstrated with three interacting particles. 
It turns out that the one-dimensional car-following model \eqref{modn} for $N$ vehicles can also be formulated as a port-Hamiltonian system. 
The input port is an external speed control, providing energy to the system and direction of motion to the vehicles. 
This speed control depends on the spacing and acts as a dynamic feedback in the port-Hamiltonian dynamics.  
We write the periodic car-following system with $N$ vehicles as
\begin{equation}
    \left\{~\begin{aligned}
    dQ(t)&=Mp(t)dt\\
    dp(t)&=-M^\top V'(Q(t))dt+\beta Mp(t)dt + \gamma [F(Q(t))-p(t)]dt+\sigma dW(t),
    \end{aligned}\right.
    \label{mod}
\end{equation}
where $Q(t)=\bigl(Q_n(t)\bigr)_{n=1}^N\in\mathbb R^N$ and $p(t)=\bigl(p_n(t)\bigr)_{n=1}^N\in\mathbb R^N$ are the spacing and speed of the vehicles, respectively, $F(Q(t))=\bigl(F(Q_1(t)),\ldots,F(Q_N(t))\bigr)^\top$, while
$$
    M=\left[\begin{matrix}
    -1&1&\\
    &\ddots&\ddots&\\
    &&-1&1\\
    1&&&-1
    \end{matrix}\right]\in\mathbb R^{N\times N}~~\text{and}\quad W(t)=(W_n(t))_{n=1}^N\in\mathbb R^N.
$$
The Hamiltonian operator is given by
\begin{equation}
    H\bigl(Q(t),p(t)\bigr) = \sum_{n=1}^N V\bigl(Q_n(t)\bigr) + \frac12\sum_{n=1}^N p_n^2(t),
\end{equation}
so that
\begin{equation}
    \frac{\partial H}{\partial Q}=\left[\begin{matrix}
        V'(Q_1)\\\vdots\\V'(Q_N)
    \end{matrix}\right],
\end{equation}
while
\begin{equation}
    \frac{\partial H}{\partial p}=\left[\begin{matrix}
        p_1\\\vdots\\p_N
    \end{matrix}\right].
\end{equation}

\begin{proposition}\label{prop:PHSformulation}
Denoting $z(t)=[Q(t),p(t)]^\top\in\mathbb R^{2N}$, the port-Hamiltonian formulation of the car-following system \eqref{mod} reads
\begin{equation}\label{PHS}
\begin{cases}
    ~dz(t)=(J-R)\nabla H\bigl(z(t)\bigr)dt+G\bigl(z(t)\bigr)u \,dt+\sigma d\,\Xi(t),\\[1mm]
	~h(z(t)):=G^\top\bigl(z(t)\bigr)\nabla H\bigl(z(t)\bigr),\qquad\qquad\qquad z(0)=z_0,~~\Xi(0)=\Xi_0,
\end{cases}
\end{equation}
where
$$
   J=\left[\begin{matrix}
    0&M\\
    -M^\top&0
    \end{matrix}\right]\in \mathbb R^{2N\times 2N}, 
    \qquad
   R=\left[\begin{matrix}
    0&0\\
    0&\gamma I-\beta M
    \end{matrix}\right]\in \mathbb R^{2N\times 2N},
$$
$$
   G\bigl(z(t)\bigr)=\left[\begin{matrix}
    0\\
    \gamma \text{diag}\bigl(F(Q(t))\bigr)
    \end{matrix}\right]\in \mathbb R^{2N\times N},
    \qquad
    \Xi(t)=\left[\begin{matrix}
    0\\
    W(t)
    \end{matrix}\right]\in\mathbb R^{2N},
$$
and where $u=(1,\ldots,1)\in\mathbb R^{N}$ is the direction of motion of the vehicles.
Note that $J$ is skew symmetric by blocks $N\times N$ while, as shown in the following Lemma \ref{lemm:OpPostSemi}, $R$ is positive semi-definite. 
\end{proposition}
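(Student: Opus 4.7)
The plan is to verify the port-Hamiltonian identity componentwise: compute both sides of \eqref{PHS} explicitly using the block structure of $J$, $R$, and $g$, and confirm that one recovers the compact form \eqref{mod} of the model. Since everything in \eqref{PHS} is linear in $\nabla H$ apart from the drift $\gamma g$ and the noise $\sigma\,d\Xi$, this reduces to a straightforward algebraic check, and the two structural claims (skew-symmetry of $J$ and positive semi-definiteness of $R$) then follow from the block forms directly.

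\medskip

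\textbf{Step 1 (gradient of $H$).} From the definition $H(Q,p)=\sum_n V(Q_n)+\tfrac12\sum_n p_n^2$, I read off $\partial H/\partial Q=V'(Q)$ and $\partial H/\partial p=p$, so
\[
\nabla H(z)=\bigl[V'(Q),\,p\bigr]^{\text T}.
\]

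\textbf{Step 2 (applying $J-R$).} Using the block forms,
\[
J-R=\left[\begin{matrix} 0 & M \\ -M^{\text T} & -(\gamma I-\beta M)\end{matrix}\right],
\]
so that
\[
(J-R)\nabla H(z)=\left[\begin{matrix} Mp \\ -M^{\text T}V'(Q)+\beta Mp-\gamma p\end{matrix}\right].
\]
Adding $\gamma g(z)=\gamma[0,F(Q)]^{\text T}$ and $\sigma\,d\Xi=[0,\sigma\,dW]^{\text T}$ gives exactly the two rows of \eqref{mod}: the first row yields $dQ=Mp\,dt$ and the second row yields $dp=-M^{\text T}V'(Q)\,dt+\beta Mp\,dt+\gamma(F(Q)-p)\,dt+\sigma\,dW$. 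Hence \eqref{PHS} is just a block rewriting of \eqref{mod}.

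\medskip

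\textbf{Step 3 (structural properties).} Skew-symmetry of $J$ is immediate from the block form: $J^{\text T}=\bigl[\begin{smallmatrix}0&-M\\M^{\text T}&0\end{smallmatrix}\bigr]=-J$, with the two off-diagonal blocks being transposes of each other up to sign. The positive semi-definiteness of $R$ reduces, via the block structure, to that of $\gamma I-\beta M$, which is exactly the content of the forthcoming Lemma \ref{lemm:OpPostSemi}; I would simply invoke it here. Finally, the output equation $h(z)=g^{\text T}(z)\nabla H(z)$ is a definition consistent with the standard input-state-output pHS setup, so no further argument is needed.

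\medskip

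\textbf{Main obstacle.} There is no real obstacle: the proposition is essentially a bookkeeping statement identifying the blocks of a pHS with the ingredients of \eqref{mod}. The only nontrivial piece is positive semi-definiteness of $R$, and this is deliberately outsourced to Lemma \ref{lemm:OpPostSemi}, which handles the spectral analysis of $\gamma I-\beta M$ for the circulant matrix $M$.
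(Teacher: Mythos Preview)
Your proposal is correct and follows essentially the same approach as the paper: a direct blockwise verification that expanding $(J-R)\nabla H(z)+\gamma g(z)+\sigma\,d\Xi$ recovers the two equations of \eqref{mod}. Your treatment is in fact slightly more explicit (you write out $J-R$ as a single block matrix before applying it, and you spell out the structural claims in Step~3), but the argument is the same.
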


\begin{proof}
By construction, \eqref{mod} is recovered from \eqref{PHS}:
\begin{equation}
   dQ(t)=\Big[\big[0~~M\big]-\big[0~~0\big]\Big]{\left[\begin{matrix}
    V'(Q(t))\\p(t)\end{matrix}\right]}dt
    =Mp\,dt,
\end{equation}
while 
\begin{equation}
\begin{aligned}
   dp(t)&=\Big[\big[-M^\top~~0\big]-\big[0~~\gamma I-\beta M\big]\Big]{\left[\begin{matrix}
    V'(Q(t))\\p(t) \end{matrix}\right]}dt+\gamma F(Q(t))dt+\sigma dW(t)\\
    &=-M^\top V'(Q(t))dt+\beta Mp(t)dt+ \gamma[F(Q(t))-p(t)]dt+\sigma d W(t).
\end{aligned}\end{equation}
where $F(Q)=\bigl(F(Q_1),\ldots,F(Q_N)\bigr)^\top$.
\end{proof}

\begin{lemma}\label{lemm:OpPostSemi}
The operator $\gamma I - \beta M$ is positive semi-definite.
\end{lemma}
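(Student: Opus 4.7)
The plan is to show that for every $x\in\mathbb R^N$, the scalar quadratic form $\langle x,(\gamma I-\beta M)x\rangle$ is nonnegative. Since $\gamma>0$ and $\beta\ge 0$, it suffices to prove that $\langle x,Mx\rangle\le 0$, because then
\begin{equation*}
\langle x,(\gamma I-\beta M)x\rangle = \gamma\|x\|^2 - \beta\langle x,Mx\rangle \ge \gamma\|x\|^2 \ge 0.
\end{equation*}

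To establish $\langle x,Mx\rangle\le 0$, I would use that $\langle x,Mx\rangle=\tfrac12\langle x,(M+M^{\text T})x\rangle$, and then compute the symmetric part explicitly. From the definition of $M$, one reads off $(M+M^{\text T})_{ii}=-2$ and $(M+M^{\text T})_{i,i+1}=(M+M^{\text T})_{i+1,i}=1$ with the cyclic convention $(M+M^{\text T})_{1,N}=(M+M^{\text T})_{N,1}=1$. Expanding the quadratic form componentwise with indices taken mod $N$ yields
\begin{equation*}
\langle x,Mx\rangle = \sum_{n=1}^N x_n(x_{n+1}-x_n) = -\sum_{n=1}^N x_n^2 + \sum_{n=1}^N x_nx_{n+1},
\end{equation*}
and the elementary inequality $x_nx_{n+1}\le\tfrac12(x_n^2+x_{n+1}^2)$ summed telescopically around the ring gives $\sum_n x_nx_{n+1}\le\sum_n x_n^2$, hence $\langle x,Mx\rangle\le 0$.

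Alternatively, one can note that $M+M^{\text T}$ is a symmetric circulant matrix whose eigenvalues, computed from the Fourier basis $\{(e^{2\pi i kn/N})_n\}_{k=0}^{N-1}$, are $-2+2\cos(2\pi k/N)=-2(1-\cos(2\pi k/N))\le 0$, so $M+M^{\text T}$ is negative semi-definite. Either route reaches the same conclusion, and combined with $\gamma\|x\|^2\ge 0$ finishes the proof.

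There is no real obstacle here: the statement reduces to checking the sign of a single cyclic quadratic form, which is immediate from $2x_nx_{n+1}\le x_n^2+x_{n+1}^2$. The only point to state carefully is the cyclic index convention so that the wraparound entry in $M$ is handled correctly when summing.
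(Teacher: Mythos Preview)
Your proof is correct and follows essentially the same route as the paper: both reduce the claim to showing $-\beta\langle x,Mx\rangle\ge 0$ and then apply the elementary inequality $x_nx_{n+1}\le\tfrac12(x_n^2+x_{n+1}^2)$ summed around the ring. Your added remark via the circulant eigenvalues $-2(1-\cos(2\pi k/N))$ is a nice alternative that the paper does not mention, but the core argument coincides.
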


\begin{proof}
Observe that 
\[
x^\top(\gamma I-\beta M)x=(\gamma +\beta)\sum_{n=1}^N x_n^2-\beta\sum_{n=1}^Nx_{n+1}x_n,
\]
with the convention $x_{n+1}=x_1$ for $n=N$.

Now, observe that
\begin{equation}\label{equ:simple}
|xy| \leq \frac{1}{2} |x|^2 + \frac{1}{2}|y|^2.
\end{equation}
By using this inequality, it holds
\begin{align*}
& x^\top(-\beta M)x \\
&= \beta (x_1^2 + x_2^2 + \dots + x_n^2 - x_1 x_2 - x_2 x_3- \dots - x_n x_1)\\
&= \beta (\frac{1}{2} |x_1|^2 + \frac{1}{2}|x_2|^2 -x_1 x_2 + \frac{1}{2} |x_2|^2 + \frac{1}{2}|x_3|^2 -x_2 x_3+ \dots) \geq 0
\end{align*}
where at the last step we use \eqref{equ:simple}.

\end{proof}

The Hamiltonian structure holds thanks to the skew symmetric matrix $J$ and the Hamiltonian operator $H$. 
The speed difference terms of the car-following model are part of the dissipation matrix $R$ in the port-Hamiltonian formulation, while the \textit{optimal velocity} function takes the role of an input control matrix $G$ acting dynamically as feedback in the system \cite{ortega2001putting}. 
The resulting pHS is an input-state-output port-Hamiltonian system without interaction port \cite[Eq.~(29)]{van2006port}, with linear structure and dissipation components. 
In addition to the functional system modelling, control gain, and preservation of physical quantities, the technical advantages of pHS lie in the direct calculation of the Hamiltonian and system output by energy balance (see \eqref{EnergyBalance} below).
It is worth noting that the model is based on the speeds of and distances to the nearest neighbours, which can be measured autonomously using sensors. This allows direct applications for adaptive cruise control systems. 
Further remarks on the microscopic model and its port-Hamiltonian formulation are given below.

\begin{remark}
The \textit{optimal velocity model} of Bando et al.\ \cite{Bando1995} and the \textit{full velocity difference model} of Jiang et a. \cite{jiang2001full} have a port-Hamiltonian structure with a linear potential function $V(x)\propto x$ (see, e.g., \cite{knorn2014passivity}). 
In contrast, the port-Hamiltonian formulation Eq.~\eqref{PHS} of the car-following model Eq.~\eqref{modn} is valid for any convex potential function $V\in C^1(\mathbb R,[0,\infty))$.
\end{remark}

\begin{remark}
The model is purely Hamiltonian (the energy is conserved) if $\beta=\gamma=\sigma=0$. Depending on the initial conditions, the system can oscillate and describe limit cycles in this particular conserved deterministic case. If $\beta=\gamma=0$ and $\sigma>0$, the system, warmed by the noise and without dissipation, diverges. 
In addition, the model is Hamiltonian-dissipative if $\gamma=0$. The system is no longer port-Hamiltonian since the input matrix port $G$ and the output port $h$ are zero.
In general, as we will see in Sec.~\ref{stab}, some conditions on the model parameters are necessary for the system to be stable.
\end{remark}

\begin{remark}
For any $n\times n$ real (or complex) matrix $A$, one can form the matrix exponential \cite{teschl} as
\begin{equation}
	e^{t A}=\sum_{k=0}^{\infty} \frac{t^k}{k !} A^k.
\end{equation}
Therefore for any $t\in [0, T]$ the unique solution to the stochastic pHS system \eqref{PHS} (with Lipschitz coefficients) can be expressed with the Duhamel formula as \cite{DapratoZab92}
\begin{equation}
	z(t)=  e^{[J-R]t} z(0) + \int_{0}^{t} e^{[J-R](t-s)}G\bigl(z(s)\bigr)u\,ds + \sigma \int_{0}^{t} e^{[J-R](t-s)} d\,\Xi(s).
\end{equation}
\end{remark}

\begin{proposition}
The balance equation of the system describing the evolution in time of the Hamiltonian is given by
\begin{equation}
	dH(z)=\Big(\gamma\sum_{n=1}^N p_n(F(Q_n)-p_n)+\beta\sum_{n=1}^N p_n(p_{n+1}-p_n)+N\frac{\sigma^2}2\Big)dt+\sigma\sum_{n=1}^N p_ndW_n.
 \label{EnergyBalance}
\end{equation}
\end{proposition}
\begin{proof}
    We can write the infinitesimal generator of  Eq.~\eqref{PHS} using the Hamiltonian $H$ as potential function as
\begin{equation}
   \mathcal{L} H(z)=\nabla^\top\!H(z) h^\top(z)-\nabla^\top\!H(z)\, R\,\nabla H(z) 
   +\frac{\sigma^2}2 \text{Tr}\{\nabla^2 H(z)\}.
\end{equation}
While, using the Itô's formula, we obtain directly
\begin{equation}
	d H(z)=\mathcal{L} H(z) dt+\sigma\nabla^\top H(z) d\,\Xi.
\end{equation}
This allows to get, after simplifications, the balance equation
\begin{equation}
	dH(z)=\Big(\gamma\sum_{n=1}^N p_n(F(Q_n)-p_n)+\beta\sum_{n=1}^N p_n(p_{n+1}-p_n)+N\frac{\sigma^2}2\Big)dt+\sigma\sum_{n=1}^N p_ndW_n.
\end{equation}
\end{proof}
\begin{remark}
Note that the Hamiltonian behaviour \eqref{EnergyBalance} does not directly depend on the interaction potential and the related parameter $\alpha$ introduced in \eqref{quadrapot} due to the block skew symmetry of the matrix $J$. 
Furthermore, the first drift terms $\gamma\sum_{n=1}^N p_n(F(Q_n)-p_n)+\beta\sum_{n=1}^N p_n(p_{n+1}-p_n)$ are equal to zero at equilibrium where $Q_n=L/N$ and $p_n=F(L/N)$ for all $n\in\{1,\ldots,N\}$.
In addition, the stochastic noise induces a deterministic positive drift $N\sigma^2/2$ in the dynamics of the Hamiltonian. 
This is due to the convex shape of the Hamiltonian.
\end{remark}

The system representation using the spacing and speed variables $(Q,p)$ enables a compact port-Hamiltonian formulation.
In Sec.~\ref{stab}, we work out a different representation of the stochastic system and the solution using the position and speed variables $(q,p)$ to study the asymptotic properties of the system as $t\rightarrow\infty$.

\section{Stability analysis \label{stab}}

The port-Hamiltonian formulation \eqref{PHS} of the car-following model \eqref{modn} is valid for any convex distance-based interaction potential $V\in C^1(\mathbb R,[0,\infty))$.
Our main purpose in this section is to prove the stability of the stochastic system \eqref{mod} for the quadratic potential function 
\begin{equation}
V:x\mapsto V(x)=\alpha \frac{x^2}2, \qquad\alpha\ge0,
\end{equation}
and the affine \textit{optimal velocity} function 
\begin{equation}
F:x\mapsto F(x)=\frac{x-\ell}T, \qquad \ell\ge0, \quad T>0,
\end{equation}
where the system is linear.
The affine \textit{optimal velocity} function corresponds to a congested (interacting) traffic state where a driver regulates the speed according to the distance to the surrounding vehicles.

In order to recall and fix notation, let $W_n(t)$, $n=1,\ldots,N$, denote $N$ independent Wiener processes over the probability space $(\Omega, \mathcal{F}, \mathbb{P})$ on a filtered space satisfying the initial conditions.
We denote by $\mathcal{B}(\mathbb{R}^{2N})$ the Borel-$\sigma$ algebra over $\mathbb{R}^{2N}$ and by $\mathcal{M}_1(\mathbb{R}^{2N})$  the space of probability measures on $\mathbb{R}^{2N}.$ In the sequel, $C_b(\mathbb{R}^{2N})$ and $\operatorname{Lip}(\mathbb{R}^{2N})$ denote the space of bounded continuous and Lipschitz functions on $\mathbb{R}^{2N}$, respectively. 
For a function $\varphi \in C_b(\mathbb{R}^{2N})$ and $\mu \in \mathcal{M}_1(\mathbb{R}^{2N}),$ we denote the natural pairing in $\mathbb{R}^{2N}$ as
\begin{equation}
\langle\varphi, \mu\rangle:= \int_{\mathbb{R}^{2N}}^{} \varphi(x) \mu(dx).
\end{equation}

We first apply a change of coordinates and  write the system around the uniform configuration (perturbation system). For any $t\ge0$, the uniform configuration reads 
\begin{equation}\label{uniformconfig}
\left\{~\begin{aligned}
	    &q_n^{\mathcal H}(t)=q_n^{\mathcal H}(0)+v_{\mathcal H} t,  &&n=1,\ldots,N,\\
	&q_{n+1}^{\mathcal H}(t)-q_n^{\mathcal H}(t)=L / N, &&n=1,\ldots,N-1\\
    &L+q_{1}^{\mathcal H}(t)-q_n^{\mathcal H}(t)=L / N,
\end{aligned}\right.
\end{equation}
with $v_{\mathcal H}=F(L / N)$ the equilibrium speed of the system.
We expand the system around the uniform configuration as
\begin{equation}\label{uniformconfigtwo}
\begin{aligned}
	&x_n(t)=q_n(t)-q_n^{\mathcal H}(t) \\
	&y_n(t)=p_n(t)-v_{\mathcal H}
\end{aligned}
\end{equation}
For $n=1,\ldots,N,$ the stochastic system in the new coordinates is the linear homogeneous dynamics given for the $n$-th vehicle by
\begin{equation}\label{mainsystem}
   \begin{cases}
    ~dx_n(t)=y_n(t) dt,\\[2.5mm]
	~dy_n(t)= \gamma\left(\frac{1}{T}\big(x_{n+1}(t)-x_n(t)\big)-y_n(t)\right)dt +\beta\left(y_{n+1}(t)-y_n(t)\right)dt\\[1mm]
	\qquad\qquad~+\;\alpha\left(x_{n+1}(t)-2x_n(t)+x_{n-1}(t)\right)dt+\sigma dW_n(t).
\end{cases}
\end{equation}

Based on the parameters and for $k=1, \dots, N,$ we define the matrix $A$ through its entries as
\begin{equation}\label{matrixa}
	\begin{aligned}
		A_{pr} = \begin{cases}
			1 & \text{ if } p=2k-1,~ r=2k-1, \\
			-2\alpha -\gamma/T & \text{ if } p=2k,~ r=2k-1,\\
			\alpha & \text{ if } p=2k,~ r=2k-3,\\
			\alpha + \gamma/T & \text{ if } p=2k,~ r=2k+1,\\
			-(\beta+\gamma) & \text{ if } p=2k,~ r=2k,\\
			\beta & \text{ if } p=2k,~ r=2k+2,\\
			0 & \text{ otherwise, } 
		\end{cases}
	\end{aligned}
\end{equation}
following the cyclic ordering for negative row and column numbers.
Using the above constructions, we can succinctly express the stochastic system of equations as
\begin{equation}\label{mainmatrixsystem}
\begin{aligned}
	d {\mathcal{R}}(t)&=A \mathcal{R}(t) dt + \Lambda d\mathcal W(t)\\
	\mathcal{R}(0)&=x \in \mathbb{R}^{2N}
\end{aligned}
\end{equation}
where we define 
\begin{align*}
	\mathcal{R}(t) &=\left( x_{1}(t),  y_{1}(t),  x_{2}(t),  y_{2}(t), \ldots,  x_{N}(t),  y_{N}(t)\right)^\top,\\
	\mathcal W(t) &= \left(W_1(t), W_2(t), W_3(t), W_4(t), \ldots, W_{2N}(t)\right)^\top\\
	\Lambda &= \text{diag}(0,\sigma,0,\sigma,\ldots,0,\sigma)
\end{align*}
so that $\mathcal W(t)$ is the $2N$-dimensional Wiener process.
Accordingly, we define 
$$L^2(\Omega; \mathbb{R}^{2N}):= \{f:\Omega\rightarrow\mathbb{R}^{2N} \text{ measurable s.t. }  \mathbb{E}|f|^2 <\infty \}.$$ 

By using \cite[Lemma 10.9 \& Thm 10.10]{AA02} we obtain the following decomposition for the matrix $A$
\begin{equation}\label{decompositionOpA}
	A = P A P + (\text{id}-P) A (\text{id}-P) := A_0 + A_1
\end{equation}
where $P$ denotes the projection onto the zero eigenvalue space of $A$. We denote the spectral bound over the set of non-zero eigenvalues as
\begin{equation}\label{constantabar}
	\bar{a} = \underset{\lambda_k^{l}\ne0} {\min}	-\Re\big(\lambda^{(l)}_k\big)
\end{equation} 
with $\Re(u)$, $u\in\mathbb C$, the real part of the complex number $u$ and where  $\{\lambda^{(l)}_k\}$, $k=0,\ldots,N-1$, $l=1,2$ denote the $2N$ eigenvalues of $A$.
As we will see later in the section \ref{submata}, the $2N$ eigenvalues of $A$ are distinct, one is equal to zero while the remaining eigenvalues have non-positive real parts, so that $\bar a>0$. 

In this regard, the solution to \eqref{mainmatrixsystem} can be written as a sum of solution to
\begin{equation}\label{mainmatrixsystemtwo}
	d {\mathcal{R'}}(t)=A_1 \mathcal{R'}(t) dt + \Lambda d\mathcal W(t)
\end{equation}
plus a vector $m_0 \in P(\mathbb{R}^{2N}),$ namely a vector from the zero eigenspace, which leads to
\begin{equation}\label{decomposition}
	\mathcal{R} = \mathcal{R'} + m_0.
\end{equation}

When we denote by $\mathcal{R'}(t) = \mathcal{R'}(t,x)$ the solution to the stochastic system \eqref{mainmatrixsystemtwo} with initial condition $\mathcal{R'}(0)=x\in \mathbb{R}^{2N},$ the corresponding semigroup is defined as
\begin{align*}
P_t \varphi(x) := \mathbb{E}(\varphi (\mathcal{R'}(t,x)))
\end{align*}
for $\varphi \in \operatorname{Lip}(\mathbb{R}^{2N}).$ For $\Gamma \in \mathcal{B}(\mathbb{R}^{2N})$ and  $x \in \mathbb{R}^{2N},$ we also define the following transition probabilities $P_t(x, \cdot)$ as
\begin{equation}
	P_t(x, \Gamma):=\text{\textit{Law}}(\mathcal{R'}(t,x))(\Gamma) = \mathbb{P}\left[\mathcal{R'}(t,x) \in \Gamma\right]
\end{equation}
for all positive times $t>0.$
For $\mu \in \mathcal{M}_1(\mathbb{R}^{2N}),$  we further define
\begin{equation}\label{defpstar}
P_t^{\star}\mu(\Gamma) := \int P_t(x, \Gamma)\mu(dx), \  t \geq 0\ , \Gamma \in \mathcal{B}(\mathbb{R}^{2N})
\end{equation}
where recall that $\mathcal{M}_1(\mathbb{R}^{2N})$ denotes the space of probability measures on $\mathbb{R}^{2N}.$

Our main result in this section is the following which is proved in Subsection~\ref{substoch}. We refer the reader to \cite[Section 3.2]{pratoZabzyc96} for definition and characterization of ergodicity and the related background, which we also recall as needed in Subsection~\ref{substoch}.
\begin{theorem}\label{thmstoch}
	If the model parameters satisfy the condition
		\begin{equation}\label{stabilitycondition}
		\gamma>0,\qquad\gamma / 2+\beta+T \alpha > 1 / T,
	\end{equation}
the solution $\mathcal{R}(t, x)$ to the stochastic system \eqref{mainmatrixsystem}, as $t\rightarrow\infty,$ converges in law to a limiting solution 
$$\mathcal{R}_\infty =\left( x_{1}^\infty,  y_{1}^\infty,  x_{2}^\infty,  y_{2}^\infty, \ldots,  x_{N}^\infty,  y_{N}^\infty\right)^\top \in L^2(\Omega; \mathbb{R}^{2N}).$$

Moreover, the law $\mathcal N  \in \mathcal{M}_1(\mathbb{R}^{2N})$ of the limiting solution $\mathcal{R}_\infty$ is the unique ergodic invariant Gaussian measure $\mathcal N:= \mathcal N(0, \Sigma(\infty))$ where
	\begin{align*}
	\Sigma(t)&= \int_{0}^{t} e^{s A_1} \Lambda \Lambda^{\top} e^{s A_1^{\top}} d s. 
	\end{align*}
Furthermore, for $\varphi \in \operatorname{Lip}(\mathbb{R}^{2N})$ and $\mathcal N_t:= \mathcal N(0, \Sigma(t))$ the following estimate holds
\begin{equation}\label{expoestimate}
\left|\langle \varphi,\mathcal N_t \rangle-\langle\varphi, \mathcal N\rangle\right|^2 \leq \left(|x|^2+ \frac{N\sigma}{\bar{a}}\right)\|\varphi\|_{\text {Lip }}^2 e^{-2\bar{a} t},
\end{equation}
for all times $t\geq 0$, with $x$ the initial conditions and $\bar{a}>0$ the positive spectral bound as defined in \eqref{constantabar}.
\end{theorem}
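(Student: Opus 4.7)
The plan is to treat \eqref{mainmatrixsystem} as a linear Gaussian SDE on $\mathbb{R}^{2N}$ and exploit the splitting \eqref{decompositionOpA}--\eqref{decomposition}: the zero eigenspace carries the only non-decaying component (simultaneous translation of all positions), while on its complement the dynamics is a standard Ornstein--Uhlenbeck process. The bulk of the work is a precise spectral description of $A$, which the excerpt postpones to Section \ref{submata}; I would carry out that description first.

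The matrix $A$ is block-circulant under the cyclic vehicle ordering, so it is simultaneously block-diagonalised by the discrete Fourier transform on $\mathbb{Z}/N\mathbb{Z}$. For each wave number $k\in\{0,\ldots,N-1\}$ with $\omega_k = e^{2\pi i k/N}$, the corresponding $2\times 2$ block is
\[
\hat A_k = \bmat 0 & 1 \\ \alpha(\omega_k+\omega_k^{-1}-2)+\tfrac{\gamma}{T}(\omega_k-1) & -(\beta+\gamma)+\beta\omega_k \emat,
\]
with characteristic polynomial $\lambda^2 - b_k\lambda - a_k$. For $k=0$ the block degenerates to eigenvalues $0$ and $-\gamma$, giving the one-dimensional zero eigenspace spanned by $(1,0,\ldots,1,0)^\text T$. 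For $k\neq 0$ I would apply a Routh--Hurwitz (Hermite--Biehler) analysis for quadratics with complex coefficients to show that \eqref{stabilitycondition} is precisely the condition under which $\Re(\lambda^{(l)}_k)<0$ for every non-trivial mode; the inequality $\gamma/2+\beta+T\alpha>1/T$ is expected to emerge from the Fourier mode nearest to $k=0$, where $1-\cos(2\pi k/N)$ is smallest. This yields the spectral gap $\bar a>0$ of \eqref{constantabar}, and since the $2N$ eigenvalues are then simple, $A_1$ is diagonalisable and one obtains a uniform bound $\|e^{tA_1}\|\leq C e^{-\bar a t}$.

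With that spectral input, the Duhamel formula gives
\[
\mathcal{R}'(t,x)= e^{tA_1}x + \int_0^t e^{(t-s)A_1}\Lambda\, d\mathcal{W}(s),
\]
so the law of $\mathcal{R}'(t,x)$ is the Gaussian $\mathcal{N}(e^{tA_1}x,\Sigma(t))$ with $\Sigma(t)\to\Sigma(\infty)=\int_0^\infty e^{sA_1}\Lambda\Lambda^\top e^{sA_1^\top}ds$, the integral converging absolutely by the spectral bound. Existence, Gaussianity and invariance of $\mathcal{N}=\mathcal{N}(0,\Sigma(\infty))$, and uniqueness together with ergodicity, then follow from the classical Ornstein--Uhlenbeck theory in \cite{pratoZabzyc96,DapratoZab92}. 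For \eqref{expoestimate} I would use a synchronous coupling: two copies of \eqref{mainmatrixsystemtwo} driven by the same $\mathcal{W}$ started at $x$ and $y$ satisfy $\mathcal{R}'(t,x)-\mathcal{R}'(t,y)=e^{tA_1}(x-y)$, so for $\varphi\in\operatorname{Lip}(\mathbb{R}^{2N})$ one has $|P_t\varphi(x)-P_t\varphi(y)|\leq \|\varphi\|_{\operatorname{Lip}}\, e^{-\bar a t}\,|x-y|$. Integrating $y$ against $\mathcal{N}$, using its invariance and Cauchy--Schwarz with $\int|y|^2\mathcal{N}(dy)=\mathrm{Tr}(\Sigma(\infty))\lesssim N\sigma^2/\bar a$, produces \eqref{expoestimate}.

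The main obstacle is the spectral step: reducing stability of every non-trivial Fourier block to the single scalar condition $\gamma/2+\beta+T\alpha>1/T$ requires a careful complex Routh--Hurwitz argument, since $a_k,b_k$ are complex and one must identify which Fourier mode is binding. Once Step 1 is done, Steps 2 and 3 are standard linear-Gaussian SDE theory.
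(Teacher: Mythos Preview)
Your proposal is correct and, for the spectral step, essentially identical to the paper: the paper also block-diagonalises $A$ by the Fourier ansatz $s_n=\xi e^{\lambda t}e^{in\theta_k}$, obtains the same $2\times 2$ characteristic quadratic per mode, applies the complex Routh--Hurwitz/Frank determinant criterion, and shows the binding constraint occurs as $\cos\theta_k\to 1$, yielding exactly $\gamma/2+\beta+T\alpha>1/T$. Your identification of the zero eigenspace as simultaneous translation and of $A_1$ as diagonalisable with gap $\bar a$ matches the paper's decomposition \eqref{decompositionOpA}--\eqref{decomposition}.

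The one genuine methodological difference is how you reach the quantitative estimate \eqref{expoestimate}. The paper does not use synchronous coupling; instead it applies It\^o's formula to $|\mathcal R'(t)|^2$ together with a dissipativity bound $\Re\langle A_1 q,q\rangle\le -\bar a\|q\|^2$, and then runs the Da~Prato--Zabczyk pullback construction (extending the Wiener process to negative times, comparing $\mathcal R'(0,-\tau_0,x)$ and $\mathcal R'(0,-\tau_1,x)$) to obtain an $L^2$ Cauchy estimate that yields \eqref{expoestimate} directly, without any prefactor from $\|e^{tA_1}\|$. Your coupling argument is more elementary and transparent for linear additive-noise systems, but it produces an extra constant $C^2$ (from $\|e^{tA_1}\|\le Ce^{-\bar a t}$, with $C$ depending on the conditioning of the eigenbasis of $A_1$) in front of the right-hand side of \eqref{expoestimate}; the paper's It\^o/dissipativity route avoids that constant, at the price of tacitly assuming the eigenvectors of $A_1$ behave like an orthonormal basis when writing $\Re\langle A_1 q,q\rangle=\sum_n\Re(a_n)\langle q,\Theta_n\rangle^2$. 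Both routes give exponential convergence with rate $\bar a$; they differ only in the multiplicative constant.
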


\begin{remark}
We comment on the convergence \eqref{expoestimate}.  Observe that for small times $t\approx 0$ the bound on the difference between the limit and the solution depends on the parameters of the system:  larger variance, number of vehicles and size of the initial data increase the bound, whereas a larger decay parameter $\bar{a}$ decreases the bound.  For large times $t>\!\!>0$ there is an exponential decay with decay rate set by the parameter $\bar{a}.$
\end{remark}

\begin{remark}
Note that the convergence of the invariant measure  in \eqref{expoestimate} is a very useful form of convergence which directly implies also the convergence in Wasserstein distance.  The Wasserstein distance between the two  probability measures $\mathcal N$ and $\mathcal N_t$  on $\mathbb{R}^{2N}$ is equivalent to \cite[Section 2]{GS02}
\begin{equation}
	d_W(\mathcal N_t, \mathcal N):=\sup \left\{\left|\int \varphi(y) \mathcal N_t(dy)-\int \varphi(y) \mathcal N(dy) \right|:\|\varphi\|_{\text {Lip }} \leq 1\right\}.
\end{equation}
Since the supremum is taken over all $\|\varphi\|_{\text {Lip }} \leq 1,$ by using our estimate \eqref{expoestimate} it follows that
\begin{align*}
d_W(\mathcal N_t, \mathcal N) \leq \left(|x|^2+ \frac{N\sigma}{\bar{a}}\right) e^{-2\bar{a} t}
\end{align*}
which directly implies the convergence 
$$
d_W(\mathcal N_t, \mathcal N) \rightarrow 0
$$
as $t\rightarrow\infty.$
\end{remark}

\begin{remark}
The stability condition \eqref{stabilitycondition} covers the well-known stability condition of the \textit{optimal velocity model} \cite{bando1995dynamical}
$$
\gamma/2>1/T
$$
if $\alpha=\beta=0$ and the stability condition of the \textit{full velocity difference model} \cite{jiang2001full} 
$$
\gamma/2+\beta>1/T
$$
if $\alpha=0$. 
The Hamiltonian component, quantified by the parameter $\alpha\ge0$, allows to improve the stability. 
More precisely, setting $\alpha=1/T^2$ in the stability condition \eqref{stabilitycondition} allows stability requirement for any $T,\gamma>0$ or $\beta\ge0$. 
In general, the stability condition tends to hold when the relaxation rates $\alpha$, $\beta$, or $\gamma$ are high. 
However, high rates can lead to uncomfortable behaviour with strong accelerations. 
\end{remark}

In order to prove the theorem \ref{thmstoch}, we need the following spectral bound for the matrix $A$, as defined in \eqref{constantabar}, which is proved later in Subsection~\ref{submata}.

\begin{theorem}\label{thmmatrixa}
	Suppose that the matrix $A$ is diagonalisable and denote $\{\lambda^{(l)}_k\}$, $k=0,\ldots,N-1$, $l=1,2$ its $2N$ (distinct) eigenvalues. Suppose that the stability conditions \eqref{stabilitycondition} hold. 
	Then, it follows that
\begin{equation}
	\begin{aligned}
		\Re\big(\lambda^{(l)}_k\big)\leq 0,\qquad \text{for all }~k=0,\ldots,N-1,\text{ and all }~l=1,2
	\end{aligned}
\end{equation}
\end{theorem}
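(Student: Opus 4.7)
The plan is to exploit the cyclic symmetry of the matrix $A$. Because the perturbation equations \eqref{mainsystem} couple each vehicle only to its two immediate neighbours on the ring, $A$ is block-circulant with $2\times 2$ blocks, hence simultaneously block-diagonalised by $F\otimes I_2$, where $F$ denotes the $N$-point discrete Fourier transform matrix. This reduces the $2N$-dimensional spectral problem to $N$ decoupled $2\times 2$ problems indexed by the Fourier wavenumber $k\in\{0,1,\ldots,N-1\}$. Writing $\omega_k = e^{2\pi i k/N}$ and $\theta_k = 2\pi k/N$, the $k$-th reduced matrix has characteristic polynomial $\lambda^2 + B\lambda + C = 0$ with complex coefficients
\begin{equation*}
B = \gamma + \beta(1-\omega_k),\qquad C = \frac{\gamma}{T}(1-\omega_k) + 2\alpha(1-\cos\theta_k).
\end{equation*}

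For the zero mode $k=0$, we have $\omega_0 = 1$, so $B=\gamma$ and $C=0$; the two eigenvalues are $0$ and $-\gamma$, which accounts for the unique zero eigenvalue of $A$ arising from translation invariance. For $k\neq 0$, I would invoke the following stability criterion for a quadratic with complex coefficients, derived by substituting $\lambda = i\xi$ in the characteristic polynomial, solving the imaginary part for $\xi$, and inserting back into the real part: both roots of $\lambda^2+B\lambda+C$ satisfy $\Re(\lambda)\le 0$ provided $\Re(B)\ge 0$ and
\begin{equation*}
\mathcal{S}(B,C) := [\Re(B)]^{2}\,\Re(C) + \Re(B)\,\Im(B)\,\Im(C) - [\Im(C)]^{2} \ge 0.
\end{equation*}
The trace condition $\Re(B) = \gamma + \beta(1-\cos\theta_k) \ge \gamma > 0$ holds automatically, so the task reduces to checking $\mathcal{S}(B,C)\ge 0$.

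The main obstacle, and the heart of the calculation, is verifying this inequality for every $k\neq 0$ under the hypothesis $\gamma/2 + \beta + T\alpha > 1/T$. Introducing $b = 1-\cos\theta_k\in(0,2]$ and using the identity $\sin^{2}\theta_k = b(2-b)$, one obtains $\Re(B) = \gamma+\beta b$, $\Im(B) = -\beta\sin\theta_k$, $\Re(C) = b(\gamma/T+2\alpha)$, $\Im(C) = -\gamma\sin\theta_k/T$; after substitution, a common factor $b/T^{2}$ can be extracted from $\mathcal{S}(B,C)$, reducing the inequality to $f(b)\ge 0$ on $[0,2]$, where
\begin{equation*}
f(b) := (\gamma+\beta b)^{2}(\gamma T + 2\alpha T^{2}) + (\gamma+\beta b)\beta\gamma T(2-b) - \gamma^{2}(2-b).
\end{equation*}
Expanding $f$ in powers of $b$, the constant term equals $2\gamma^{2} T(\gamma/2+\beta+T\alpha - 1/T)$, strictly positive by the stability hypothesis; the linear coefficient equals $\gamma(\gamma+\beta\gamma T + 2\beta^{2} T + 4\alpha\beta T^{2})$, strictly positive since $\gamma>0$; and the quadratic coefficient equals $2\alpha\beta^{2} T^{2}\ge 0$. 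Hence $f$ is a sum of non-negative monomials with strictly positive constant term, so $f(b) > 0$ on $[0,2]$, and every non-zero mode contributes two eigenvalues with strictly negative real part. Combined with the explicit $k=0$ analysis, this yields $\Re(\lambda_k^{(l)}) \le 0$ for all $2N$ eigenvalues, and moreover gives the strict bound $\bar a > 0$ defined in \eqref{constantabar}.
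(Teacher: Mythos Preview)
Your proof is correct and follows essentially the same route as the paper: reduce to $N$ Fourier modes (the paper does this via the exponential Ansatz $s_n=\xi e^{\lambda t}e^{in\theta_k}$, you via block-circulant diagonalisation---these are the same thing), obtain the identical quadratic characteristic equation, and apply the Routh--Hurwitz criterion for complex quadratics (your $\mathcal S(B,C)$ is exactly the determinant $\mu_\theta(\nu_\theta\mu_\theta+\rho_\theta\sigma_\theta)-\rho_\theta^2$ that the paper takes from Frank's criterion). The only genuine difference is the last algebraic step: the paper factors $(1-\cos\theta)$, works with $h(x)$ for $x=\cos\theta$, and argues from the sign of $h(1)$ together with the shape of $h$ as a quadratic; you change to $b=1-\cos\theta$ and expand the remaining factor as $c_0+c_1 b+c_2 b^2$ with $c_0>0$, $c_1>0$, $c_2\ge0$. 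Your version is a bit more self-contained---positivity on $[0,2]$ is immediate from the signs of the coefficients---whereas the paper's ``by continuity'' step implicitly uses that the vertex of $h$ lies to the right of $x=1$, which your coefficient calculation makes explicit.
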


\subsection{Proof of Theorem \ref{thmstoch}: Stochastic system stability}\label{substoch}
In Theorem \ref{thmmatrixa} we obtained the condition for the eigenvalues of A
\begin{equation}
	\Re\big(\lambda^{(l)}_k\big)\leq 0
\end{equation}
for $k=0,\ldots,N-1$, $l=1,2.$  It also follows that the spectral bound $\bar{a}$ as in \eqref{constantabar} is well-defined and positive, that is $\bar{a}>0.$

For the operator $A_1,$ that we obtained through the decomposition \eqref{decompositionOpA}, we show below the following important estimates
\begin{equation}\label{equ:lumarphillips}
	\begin{aligned}
		& \Re \innerprod{A_1 q}{q} < -\overline{a} ||q||^{2} \\
		& ||e^{t A_1} q || \leq \sqrt{(2 N-1)} e^{-t\overline{a}} ||q||.
	\end{aligned}
\end{equation}
In order to show that the estimates \eqref{equ:lumarphillips} hold, we write the vector $q$ in the eigenbasis $\{{\Theta}_n\}_n$ of $A_1$ as $$
q=\sum_{n=1}^{2 N-1}\left\langle q, {\Theta}_n\right\rangle {\Theta}_n,
$$
which implies
\begin{align}
	&\Re \innerprod{A_1q}{q} \\
	&=  \Re  \innerprod{\sum_{n=1}^{2N-1} a_n \left\langle q, {\Theta}_n\right\rangle {\Theta}_n}{\sum_{n=1}^{2N-1} \left\langle q, {\Theta}_n\right\rangle {\Theta}_n}\\
	&= \sum_{n=1}^{2N-1}  \Re a_n \left\langle q, {\Theta}_n\right\rangle^{2}\leq -\overline{a} ||q||^{2}
\end{align}
For the proof of the second estimate in \eqref{equ:lumarphillips} we obtain similarly 
$$
\begin{aligned}
	\left\|e^{A_1 t} q\right\|_{}&=\left\|\sum_{n=1}^{2 N-1}\left\langle q, {\Theta}_n\right\rangle e^{a_n t} {\Theta}_n \right\| \\
	& \leq \sum_{n=1}^{2 N-1}\left|\left\langle q, {\Theta}_n\right\rangle\right| e^{\Re\left(a_n\right) t} \\
	& \leq \sqrt{(2 N-1)}\|q\|_{} e^{-\overline{a} t}.
\end{aligned}
$$

We now set out to prove that these estimates imply  the stochastic system  \eqref{mainmatrixsystemtwo} has a unique invariant measure which is also ergodic. By taking $q= x-y$ in the first estimate of \eqref{equ:lumarphillips} we obtain
\begin{equation}\label{equ:lumarphillips2}
	\begin{aligned}
		&  \innerprod{A_1(x-y)}{x-y} < -\bar{a}||x-y||^{2},
	\end{aligned}
\end{equation}
in the real Hilbert space setting.  

In order to show the existence of a unique ergodic invariant measure, we follow the classical arguments in \cite{pratoZabzyc96, DapratoZab92}.  For nonnegative times $t \geq 0$, we consider  Wiener processes $ V_n(t)$, independent of $ W_n(t)$, and  define

$$
\begin{aligned}
	&\overline{W}_n(t)=\left\{\begin{array}{l}
		W_n(t) \text { if } t \geq 0 \\
		V_n(-t) \text { if } t \leq 0.
	\end{array}\right.
\end{aligned}
$$

Next, for any $s\in\mathbb{R}$ and $x\in\mathbb{R}^{2N}$, we consider the following equation with the strong solution $\mathcal{R'}(t) = \mathcal{R'}(t, s, x),$ started at time $s$ for any $s\leq t,$ as
\begin{equation}\label{mainmatrixsystemthree}
	\begin{aligned}
		d {\mathcal{R'}}(t) &=A_1 \mathcal{R'}(t) dt + \Lambda d\overline{\mathcal W}(t)\\
		\mathcal{R'}(s) &= x,
	\end{aligned}
\end{equation}
where we defined
\begin{align*}
	\overline{\mathcal W}(t) &= \left(\overline W_1(t),\overline W_2(t), \overline W_3(t),\overline W_4(t), \ldots, \overline W_{2N}(t)\right)^\top.
\end{align*}

We apply Itô's lemma to the square of the strong solution $\left|\mathcal{R'}(t)\right|^2$, $t \geq s$. We calculate as
$$
\begin{aligned}
	d\left|\mathcal{R'}(t)\right|^2 =\left\{2\left\langle A_1 \mathcal{R'}(t), \mathcal{R'}(t) \right\rangle + \frac{N\sigma^2}{2} \right\} d t +2\left\langle \mathcal{R'}(t), \Lambda d \overline{\mathcal W}(t)\right\rangle,
\end{aligned}
$$
and obtain
$$
\begin{aligned}
	\frac{d}{d t} \mathbb{E}\left(\left|\mathcal{R'}(t)\right|^2\right) &=\mathbb{E}\left(2\left\langle A_1 \mathcal{R'}(t), \mathcal{R'}(t)\right\rangle+ \frac{N\sigma}{2}\right)  \leq-2\bar{a} \mathbb{E}\left(\left|\mathcal{R'}(t)\right|^2\right) + \frac{N\sigma}{2},
\end{aligned}
$$
where we used the estimate \eqref{equ:lumarphillips2}. From this, we obtain the estimate
$$
\mathbb{E}\left|\mathcal{R'}(t)\right|^2 \leq e^{-2\bar{a}(t-s) / 2} \left[|x|^2 - \frac{N\sigma}{2\bar{a}}\right] + \frac{N\sigma}{2\bar{a}},\qquad t \geq s .
$$
As $\bar{a}>0$ and $t\geq s,$ we can write
\begin{equation}\label{secondmomentestimate}
	\mathbb{E}|\mathcal{R'}(t)|^2 \leq \left(|x|^2+ \frac{N\sigma}{\bar{a}}\right),\qquad t \geq s.
\end{equation}

Now, for $\tau_1>\tau_0>0$ and $t \geq-\tau_0$ we define
$$
\mathcal{Z'}(t)=\mathcal{R'}(t,-\tau_0, x)-\mathcal{R'}(t,-\tau_1, x),
$$
where recall that $\mathcal{R'}(t,-\tau_0, x)$ and $\mathcal{R'}(t,-\tau_1, x)$ respectively denote the solutions to \eqref{mainmatrixsystemthree} started at times $-\tau_0$ and $-\tau_1$ with initial data $x\in \mathbb{R}^{2N}.$
This time we proceed similar to above with $|\mathcal{Z'}(t)|^2$ following the same steps and find the final estimate
$$
\mathbb{E}|\mathcal{Z'}(t)|^2 \leq e^{-2\bar{a}(t+\tau_0)} \mathbb{E}|\mathcal{R'}(-\tau_0,-\tau_1, x)|^2,\qquad t>-\tau_0,
$$
which we combine with \eqref{secondmomentestimate} to obtain
\begin{equation}\label{differencesecondmomentest}
	\mathbb{E}|\mathcal{R'}(0,-\tau_0, x)-\mathcal{R'}(0,-\tau_1, x)|^2 \leq \left(|x|^2+ \frac{N\sigma}{\bar{a}}\right) e^{-2\bar{a} \tau_0},\qquad \tau_1>\tau_0.
\end{equation} 
This implies convergence in law; we directly obtain
$$
\textit{Law}(\mathcal{R'}(t, 0, x))=\textit{Law}(\mathcal{R'}(0,-t, x)) \rightarrow \textit{Law}(\mathcal R_\infty)=\mathcal N \text {, weakly as } t \rightarrow+\infty,
$$
that is to say
\begin{equation}\label{equ:weakConv}
\langle P_t^{\star} \delta_x, \varphi\rangle \rightarrow \langle \mathcal N, \varphi\rangle \text { as } t \rightarrow+\infty 
\end{equation}
for $\varphi \in C_b(\mathbb{R}^{2N}).$

We want to show that the law $\mathcal N$  of $\mathcal R_\infty$ is the invariant measure satisfying the estimate \eqref{expoestimate}. In order to see this, we show that \eqref{equ:weakConv} implies $\mathcal N$ is an invariant measure for the stochastic system \eqref{mainmatrixsystemthree}.  For $\varphi \in C_b(\mathbb{R}^{2N})$ and the initial data $x \in \mathbb{R}^{2N}$ observe that $P_t^{\star} \delta_x =\textit{Law}(\mathcal R'(t, 0, x)),$ where $P_t^{\star}$ is as defined in \eqref{defpstar}.
For an arbitrary time $r>0$, we obtain
$$
\langle \varphi, P_{t+r}^{\star} \delta_x \rangle=\langle P_r \varphi, P_t^{\star} \delta_x\rangle.
$$
Since $P_r \varphi \in C_b(\mathbb{R}^{2N})$, we take the limit as $t \rightarrow \infty$ and use \eqref{equ:weakConv} to obtain
$$
\langle \varphi, \mathcal N \rangle=\langle P_r \varphi, \mathcal N\rangle=\langle\varphi, P_r^{\star} \mathcal N\rangle,
$$
which shows $\mathcal N=P_r^{\star} \mathcal N.$  So, we demonstrated that $\mathcal N$ is the unique invariant measure.

Finally, let $\varphi \in \operatorname{Lip}(\mathbb{R}^{2N})$, then by \eqref{differencesecondmomentest}
holds 
$$
\begin{aligned}
	\left|P_t \varphi(x)-P_s \varphi(x)\right|^2 &=|\mathbb{E}(\varphi(\mathcal{R'}(0,-t, x)))-\mathbb{E}(\varphi(\mathcal{R'}(0,-s, x)))|^2 \\
	& \leq\|\varphi\|_{\mathrm{Lip}}^2 \mathbb{E}|\mathcal{R'}(0,-t, x)-\mathcal{R'}(0,-s, x)|^2 \\
	& \leq \left(|x|^2+ \frac{N\sigma}{\bar{a}}\right)\|\varphi\|_{\mathrm{Lip}}^2 e^{-2\bar{a} t} .
\end{aligned}
$$
As $s \rightarrow+\infty$, we find
\begin{equation}\label{est:ptest}
\left|P_t \varphi(x)-\langle\varphi, \mathcal N\rangle\right|^2 \leq \left(|x|^2+ \frac{N\sigma}{\bar{a}}\right)\|\varphi\|_{\text {Lip }}^2 e^{-2\bar{a} t} .
\end{equation}

This clarifies the existence and uniqueness of unique invariant measure. Now, we can also explicitly calculate the limiting law.
The property $\mathbb{E}[\mathcal R'(t)] \rightarrow 0$ as $t\rightarrow \infty$ follows directly from the following Duhamel's formula for the system \eqref{mainmatrixsystemtwo}
\begin{align*}
	\mathcal R'(t)=e^{t A_1} \mathcal R'(0)+\int_{0}^{t} e^{(t-s) A_1} \Lambda d \mathcal W(s) 
\end{align*}
and the second part of the estimate \eqref{equ:lumarphillips}. We have that $\mathcal R'$ is a Gaussian random variable with expectation $\mu_{\mathcal R'}(t)$ and covariance operator $\Sigma(t)$  given by
\begin{equation}\label{def:variancesigmat}
\begin{aligned}
\mu_{\mathcal R'(0)}(t)=e^{t A_1} \mathcal R'(0), \quad \Sigma(t)=  \int_{0}^{t} e^{s A_1}  \Lambda \Lambda^{\top} e^{s A_1^{\top}} d s .
\end{aligned}
\end{equation}
 Accordingly, the characteristic function of $R'(t)$ reads
\begin{equation}
 \mathbb{E}\left[e^{i\langle p, \mathcal R'(t)\rangle}\right]=\exp \left(i\langle p, e^{t A_1} \mathcal R'(0) \rangle-\frac{1}{2}\langle p, \Sigma(t) p\rangle\right), \quad \forall p \in \mathbb{R}^{2 N}.
\end{equation}

By using the second estimate in \eqref{equ:lumarphillips}, it follows that $e^{t A_1} \mathcal R'(0) \rightarrow 0$ as $t\rightarrow \infty$. Therefore, in order to further examine the limiting behaviour, by Levy's continuity theorem \cite{Klenke04}, the asymptotic characteristic function is given by
\begin{equation}
	\lim _{t \rightarrow \infty} \mathbb{E}\left[e^{i\langle p, \mathcal R'(t)\rangle}\right]=\exp \left(-\frac{1}{2}\langle p, \Sigma(\infty) p\rangle\right), \quad \forall p \in \mathbb{R}^{2 N}
\end{equation}
for a well defined and finite $\Sigma(\infty).$  

We now need to show that $\Sigma(\infty)$ is finite.  We directly calculate
\begin{equation}
	\int_{0}^{\infty}\left|\left\langle p, e^{A_1 t} \Lambda \Lambda^{\top} e^{A_1^{\top} t} q\right\rangle\right| d t<\infty, \quad \forall p, q \in \mathbb{R}^{2 N}
\end{equation}
By using \eqref{equ:lumarphillips} one more time we obtain
\begin{equation}\label{expoEstimate}
	\begin{aligned}
		&\int_{0}^{\infty}\left|\left\langle p, e^{A_1 t} \Lambda \Lambda^{\top} e^{A_1^{\top} t} q\right\rangle\right| d t\\
		& \leq \int_{0}^{\infty}\left\| e^{A_1^{\top} t} p\right\|_{2 N}\left\|\Lambda \Lambda^{\top} e^{A_1^{\top} t} q\right\|_{2 N} d t \\
		& \leq\left(\int_{0}^{\infty}\left\|e^{A_1^{\top} t} p\right\|_{2 N}^{2} d t\right)^{1 / 2}\left(\int_{0}^{\infty}\left\|\Lambda \Lambda^{\top} e^{A_1^{\top} t} q\right\|_{2 N}^{2} d t\right)^{1 / 2}\\
		&\leq \left(\int_{0}^{\infty} e^{-\overline{a} t}  d t\right) ||\Lambda \Lambda^{\top}|| ||p|| ||q||\\
		&\leq \frac{||\Lambda \Lambda^{\top}|| ||p|| ||q||}{\overline{a}}
	\end{aligned}
\end{equation}
where we used the fact that the operator $A_1$ and  $A_1^{\top}$ have same eigenfunctions but conjugated corresponding eigenvalues.  In view of the decomposition \eqref{decomposition}, this leads to the invariant measure $\mathcal N= \mathcal N(0, \Sigma(\infty))$ since when we apply the coordinate transformation in \eqref{uniformconfig} the uniform configuration
\begin{equation}
(q_1^{\mathcal H}, v_{\mathcal H},2 L/N, v_{\mathcal H}, \dots, q_N^{\mathcal H}, v_{\mathcal H}) \in \mathbb{R}^{2N}
\end{equation}
is subtracted out.
In fact, the zero eigenvector of the matrix $A$ is zero ($m_0 =0$ in \eqref{decomposition}) in this new coordinates and in the space of physically interesting configurations with periodic boundary conditions that satisfies $\sum_{n=1}^{N} (x_{n+1}-x_n)= 0$.

In order to show the estimate \eqref{expoestimate}, by using the notation in \eqref{def:variancesigmat} with $\mathcal N_t= \mathcal N(0, \Sigma(t))$,  we observe
\begin{align*}
	P_t \varphi(x) = \mathbb{E}(\varphi (\mathcal{R'}(t,x))) = \int \varphi(y) \mathcal N_t(dy) = \langle \varphi,\mathcal N_t \rangle,
\end{align*}
for $\varphi \in \operatorname{Lip}(\mathbb{R}^{2N})$.
So that the estimate \eqref{expoestimate} directly follows from the estimate \eqref{est:ptest}.

In order to study ergodicity of $\mathcal N,$ we first recall the classical construction to realize the process $\overline{\mathcal W}(t)$ as a canonical coordinate process. For $E:=\mathbb{R}^{2N}, \mathcal{E}:=\mathcal{B}(\mathbb{R}^{2N}),$ by Daniell-Kolmogorov construction theorem \cite{KS91}, the process $\overline{\mathcal W}(t)$ can always be realized as a canonical coordinate process over the product space $(E^{\mathbb R}, \mathcal{E}^{\mathbb{R}})$ in the sense that
$$
\overline{\mathcal W}_t(\omega) = \omega_t 
$$
for $\omega \in E^{\mathbb R},$ where $E^{\mathbb R}$ denotes the functions $\mathbb{R} \rightarrow E$.   By defining a family of transformations $\theta_t: E^{\mathbb R} \rightarrow E^{\mathbb R}$
$$
(\theta_t\omega)(s):= \omega(t+s),
$$
we obtain a dynamical system $(E^{\mathbb R}, \mathcal{E}^{\mathbb{R}}, \theta_t, \mathbb{P}_{\mathcal{N}}),$ where $\mathcal{N}$ denotes the invariant measure we obtained before.  

In order to show that the invariant measure $\mathcal{N}$ is ergodic we argue by contradiction, following the standard arguments \cite{pratoZabzyc96}.  Suppose $\mathcal{N}$ is the unique invariant measure but there exists a measurable set $B \in \mathcal{B}(\mathbb{R}^{2N})$ with $\mathcal N(B) \in (0,1)$ such that
\begin{equation}\label{equ:contra}
P_{t} \mathbbm{1}_B =\mathbbm{1}_B 
\end{equation}
$\mathcal{N}-$almost surely, where $\mathbbm{1}_B$ denotes the characteristic function for the set $B$. 
For $ A \in \mathcal{B}(\mathbb{R}^{2N}),$ we define the measure
$$
\widetilde{\mathcal N}(A):=\frac{\mathcal{N}(A \cap B)}{\mathcal{N}(B)},
$$
which is the relative measure with respect to $B$.
We will check that the measure $\widetilde{\mathcal N}$ is also an invariant measure which is a contradiction to the uniqueness.

Observe that by using the assumption \eqref{equ:contra} we have 
$$P_{t}\left(x, A \cap B^{c}\right) \leq P_{t}\left(x, B^{c}\right)=0$$
$\mathcal{N}-$almost surely for $x \in B$ and
$$P_{t}(x, A \cap B) \leq P_{t}(x, B)=0$$ 
$\mathcal{N}-$almost surely for  $x \in B^{c}$. Therefore we obtain
\begin{align*}
\frac{1}{\mathcal{N}(B)} \int_{B^{c}} P_{t}(x, A \cap B) \mathcal{N}(d x)&=0\\
\frac{1}{\mathcal{N}(B)} \int_{B} P_{t}\left(x, A \cap B^{c}\right) \mathcal{N}(d x)&=0.
\end{align*} 
By using this, for arbitrary $t\geq 0$ and $A \in \mathcal{B}(\mathbb{R}^{2N})$, we can calculate
\begin{equation}
	\begin{aligned}
		&P_{t}^{*} \widetilde{\mathcal N}(A)=\int_{\mathbb{R}^{2N}} P_{t}(x, A) \widetilde{\mathcal N}(d x)=\frac{1}{\mathcal{N}(B)} \int_{B} P_{t}(x, A) \mathcal{N}(d x) \\
		&=\frac{1}{\mathcal{N}(B)} \int_{B} P_{t}(x, A \cap B) \mathcal{N}(d x)+\frac{1}{\mathcal{N}(B)} \int_{B} P_{t}\left(x, A \cap B^{c}\right) \mathcal{N}(d x) \\
		&=\frac{1}{\mathcal{N}(B)} \int_{B} P_{t}(x, A \cap B) \mathcal{N}(d x)+ \frac{1}{\mathcal{N}(B)} \int_{B^{c}} P_{t}(x, A \cap B) \mathcal{N}(d x)\\
		&+\frac{1}{\mathcal{N}(B)} \int_{B} P_{t}\left(x, A \cap B^{c}\right) \mathcal{N}(d x) \\
		& =\frac{1}{\mathcal{N}(B)} \int_{\mathbb{R}^{2N}} P_{t}(x, A \cap B) \mathcal{N}(d x).
	\end{aligned}
\end{equation}
 By using the invariance of $\mathcal{N}$ we further obtain
\begin{equation}
	\begin{aligned}
		P_{t}^{*} \widetilde{\mathcal N}(A) &=\frac{1}{\mathcal{N}(B)} \int_{\mathbb{R}^{2N}} P_{t}(x, A \cap B) \mathcal{N}(d x) \\
		&=\frac{1}{\mathcal{N}(B)} \mathcal{N}(A \cap B)=\widetilde{\mathcal N}(A),
	\end{aligned}
\end{equation}
which essentially implies that $\widetilde{\mathcal N}$ is another invariant measure.
This is a contradiction to the uniqueness of $\mathcal N$.  So, it follows that the unique invariant measure $\mathcal{N}$ is also ergodic. This completes the proof of Theorem \ref{thmstoch}.

\subsection{Proof of Theorem \ref{thmmatrixa}: Matrix $A$ spectral bound}\label{submata}
It is well known in the literature that there is a relationship between the spectral bound of a matrix and the stability of the corresponding dynamical system. In order to prove the spectral bound, we use this correspondence and a specifically constructed dynamical system. More precisely, we use the following proposition.  Recall that a linear system  is called stable if the solution remains bounded as $t \rightarrow \infty.$
\begin{proposition} (see \cite{teschl}).\label{teschlprop}
The linear system 
\begin{equation}\label{mainmatrixsystemdeter}
	d {\mathcal{S}}(t)=A \mathcal{S}(t) dt,
\end{equation}
where
\begin{align*}
	\mathcal{S}(t) :=\left( s_{1}(t),  s_{2}(t),\dots\dots, s_{M}(t)\right)^T
\end{align*}
is stable if and only if all eigenvalues $\Re\big(\lambda^{}_k\big)$ of $A$ satisfy $\Re\big(\lambda^{}_k\big)\leq 0$ and for all eigenvalues with $\Re\big(\lambda^{}_k\big)=0$ the corresponding algebraic and geometric multiplicities are equal.
\end{proposition}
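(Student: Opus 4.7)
The plan is to exploit the cyclic symmetry of the system. Since the microscopic equations in \eqref{mainsystem} are identical for each vehicle and involve only nearest-neighbour coupling on the ring, the matrix $A$ is block-circulant with $2\times 2$ blocks and can be simultaneously block-diagonalised by the discrete Fourier transform in the vehicle index. Setting $\omega_k = e^{2\pi i k/N}$ and using the Fourier ansatz $(x_n,y_n) = \omega_k^n(\hat x_k, \hat y_k)$ in the noiseless version of \eqref{mainsystem} decouples the dynamics into $N$ independent planar linear ODEs, one per mode $k \in \{0, 1, \ldots, N-1\}$, with $2\times 2$ block
\begin{equation*}
A^{(k)} = \begin{pmatrix} 0 & 1 \\ \tfrac{\gamma}{T}(\omega_k-1) + \alpha(\omega_k - 2 + \omega_k^{-1}) & -\gamma + \beta(\omega_k-1) \end{pmatrix}
\end{equation*}
and characteristic polynomial $p_k(\lambda) = \lambda^2 + b_k\lambda + c_k$ where $b_k = \gamma + \beta(1-\omega_k)$ and $c_k = \tfrac{\gamma}{T}(1-\omega_k) + \alpha(2-\omega_k-\omega_k^{-1})$.

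For $k=0$ we have $\omega_0 = 1$, so $p_0(\lambda) = \lambda(\lambda+\gamma)$, giving eigenvalues $0$ and $-\gamma$; the zero eigenvalue is precisely the one isolated in the decomposition \eqref{decompositionOpA}. For $k \neq 0$, set $\theta_k = 2\pi k/N$ and $s_k = \sin(\theta_k/2)$, and use the identities $1-\omega_k = 2s_k(s_k - i\cos(\theta_k/2))$ and $2-\omega_k-\omega_k^{-1} = 4s_k^2$ to read off $\Re(b_k) = \gamma + 2\beta s_k^2 > 0$, $\Re(c_k) = 2s_k^2(\gamma/T + 2\alpha) \geq 0$, with imaginary parts proportional to $s_k\cos(\theta_k/2)$. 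Writing the roots as $\lambda_\pm = (-b_k \pm \sqrt{b_k^2 - 4c_k})/2$ and using $\Re(\sqrt{X+iY}) = \sqrt{(X+\sqrt{X^2+Y^2})/2}$ for the principal branch, both roots satisfy $\Re(\lambda) \leq 0$ if and only if the complex-coefficient Routh--Hurwitz inequality
\begin{equation*}
\Re(b_k)^2\,\Re(c_k) + \Re(b_k)\,\Im(b_k)\,\Im(c_k) \geq \Im(c_k)^2
\end{equation*}
holds.

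The main obstacle will be verifying this inequality uniformly in $s_k^2 \in (0,1]$ under the hypothesis \eqref{stabilitycondition}. Substituting the explicit coefficients and dividing by $2s_k^2$, the inequality becomes
\begin{equation*}
\left(\tfrac{\gamma}{T} + 2\alpha\right)(\gamma + 2\beta s_k^2)^2 + \tfrac{2\beta\gamma}{T}(1-s_k^2)(\gamma + 2\beta s_k^2) \geq \tfrac{2\gamma^2}{T^2}(1-s_k^2).
\end{equation*}
The critical mode is the long-wavelength limit $s_k^2 \to 0$: there the inequality reduces, after dividing by $2\gamma^2/T$, to exactly $\gamma/2 + \beta + T\alpha \geq 1/T$, i.e.\ \eqref{stabilitycondition}. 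For $s_k^2$ bounded away from zero the left-hand side receives strictly positive corrections in $s_k^2$ from the expansion of $(\gamma + 2\beta s_k^2)^2$ while the right-hand side strictly decreases, so a term-by-term comparison shows that the $s_k^2 = 0$ case is the worst and the hypothesis propagates to all modes $k \neq 0$. Combined with the distinctness of eigenvalues assumed in the theorem statement, this yields $\Re(\lambda_k^{(l)}) \leq 0$ for all $k \in \{0, \ldots, N-1\}$ and $l \in \{1, 2\}$.
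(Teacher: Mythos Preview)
You have proved the wrong statement. Proposition~\ref{teschlprop} is a general textbook fact about \emph{arbitrary} linear constant-coefficient systems $d\mathcal{S}(t)=A\mathcal{S}(t)\,dt$: it characterises boundedness of all solutions in terms of the spectrum of a generic $M\times M$ matrix $A$. The paper does not prove this proposition at all; it is quoted from \cite{teschl} and used as a black box. A proof would proceed via the Jordan normal form of $A$ and the explicit form $e^{tA}=Pe^{tJ}P^{-1}$, noting that a Jordan block for an eigenvalue $\lambda$ with $\Re(\lambda)=0$ produces polynomially growing entries in $e^{tJ}$ unless the block has size one, i.e.\ unless algebraic and geometric multiplicities agree.

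What you have actually written is a proof of Theorem~\ref{thmmatrixa}, the spectral bound for the \emph{specific} block-circulant matrix $A$ defined in \eqref{matrixa}. For that result your approach coincides with the paper's: both block-diagonalise via the discrete Fourier ansatz, obtain the same quadratic characteristic polynomial for each mode, apply the complex Routh--Hurwitz criterion \eqref{determinantcondition}, and reduce the worst case to the long-wavelength limit, which yields precisely \eqref{stabilitycondition}. The only cosmetic difference is that you parametrise by $s_k^2=\sin^2(\theta_k/2)$ whereas the paper uses $c_\theta=\cos\theta=1-2s_k^2$; the algebra is identical. Your monotonicity claim (``term-by-term comparison shows $s_k^2=0$ is the worst'') is a bit loose, since the cross term $\tfrac{2\beta\gamma}{T}(1-s_k^2)(\gamma+2\beta s_k^2)$ is not monotone in $s_k^2$; the paper handles this by writing $f(x)=(1-x)h(x)$ and observing that $h$ is quadratic with positive leading coefficient, but neither argument is fully spelled out.
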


For the proof of Theorem \ref{thmmatrixa}, we consider the solution $s(t)$ to the deterministic linear system
\begin{equation}\label{mainsystemUniform}
		\ddot{s}_n=\gamma\left(\frac{s_{n+1}-s_{n}}{T}-\dot{s}_n\right)+\beta\left(\dot{s}_{n+1}-\dot{s}_n\right)+\alpha\left(s_{n+1}-2s_n+s_{n-1}\right)
\end{equation}
using the exponential Ansatz 
$$
\begin{aligned}
	&s_n= \xi e^{\lambda t} e^{i n \theta_k}, \quad \dot{s}_n= \xi \lambda  e^{\lambda t} e^{i n \theta_k}, \quad \ddot{s}_n=\lambda^2\xi e^{\lambda t} e^{i n \theta_k}, \qquad\xi,\lambda\in\mathbb C,
\end{aligned}
$$
with $\theta_k=2\pi k/N$, $k=0,\ldots,N-1$. 
We obtain the following characteristic equation
\begin{equation}\label{characteristic}
	\begin{aligned}
		&\lambda^2+\gamma\left(\frac{1-e^{i \theta_k}}{T}+\lambda\right)+\beta \lambda\left(1-e^{i \theta_k}\right)+\alpha\left(2-e^{i \theta_k}-e^{-i \theta_k}\right)=0.
	\end{aligned}
\end{equation}
Note that we have two real solutions $\lambda_0^{(1)}=0$ and $\lambda_0^{(2)}=-\gamma$ for $\theta_k=0$ (i.e., $k=0$).
For simplicity, we put $c_\theta := \cos\theta, s_\theta :=\sin\theta$ and introduce the variables
\begin{equation}\label{equ:parameterab}
	\left\{ ~\begin{aligned}
		\mu_\theta &= \beta(1-c_\theta) + \gamma,\\      
		\sigma_\theta &= -\beta s_\theta,                              
	\end{aligned} \right. \qquad 
	\left\{ ~\begin{aligned}
		\nu_\theta &= (1-c_\theta) (\gamma/T+2\alpha),\\
		\rho_\theta &=  -s_\theta \gamma/T.
	\end{aligned} \right.
\end{equation}
Recall the well-known result \cite{F46} showing that the roots $\{\lambda_\theta\}_{\theta}$ of second order polynomial of the form $x^{2}+(\mu_{\theta}+\mathrm{i} \sigma_{\theta}) x +\nu_{\theta}+\mathrm{i} \rho_{\theta}=0$ satisfy $\Re(\lambda_\theta) <0$ if and only if we have the following conditions
\begin{equation}\label{determinantcondition} 
\mu_{\theta}>0\quad\text{and}\quad \operatorname{det}\left[\begin{array}{ccc}
		\mu_{\theta} & 0 & -\rho_{\theta} \\
		1 & \nu_{\theta} & -\sigma_{\theta} \\
		0 & \rho_{\theta} & \mu_{\theta}
	\end{array}\right]=\mu_{\theta}\left(\nu_{\theta} \mu_{\theta}+\rho_{\theta} \sigma_{\theta}\right)-\rho_{\theta}^{2}>0,
\end{equation}
see also \cite{Tordeux2012,tordeux2017influence,cordes2023single}. The parameters \eqref{equ:parameterab} with the second condition \eqref{determinantcondition} give the following condition for stability
\begin{equation}\label{equ:stability_general}
	\begin{aligned}
		(\beta(1-c_\theta)+\gamma) \left[(\beta(1-c_\theta)+\gamma)\left(1-c_\theta\right)\left(\frac{\gamma}{T}+2 \alpha\right)+ \frac{\beta \gamma}{T} s_\theta^2 \right] -\left(\frac{\gamma}{T}\right)^2 s_{\theta}^2 >0
	\end{aligned}
\end{equation}
For $x=c_\theta\in[-1,1]$ we write the l.h.t. of \eqref{equ:stability_general} in the following form
\begin{equation}
	f(x) = (1-x) g(x)
\end{equation}
where we defined
\begin{equation}
	\begin{aligned}
		g(x)=(\beta(1-x)+\gamma) \left[(\beta(1-x)+\gamma)\left(\frac{\gamma}{T}+2 \alpha\right)+ \frac{\beta \gamma}{T} (1+x) \right] -\left(\frac{\gamma}{T}\right)^2 (1+x). 
	\end{aligned}
\end{equation}
Note that $f(1)=0$ while $f'(x)=g'(x)(1-x)-g(x)$. 
Furthermore, the function $g$ is a second order polynomial with positive higher order coefficient if $\alpha,\beta>0$, while it is affine with negative slope if $\beta=0$ or $\alpha=0$.
From this, we can conclude by continuity that \eqref{determinantcondition}  holds if
\begin{equation}
	\begin{aligned}
		f'(1) = -g(1) = -\gamma \left[\frac{2\beta \gamma}{T}  + \frac{\gamma^2}{T} + 2 \gamma\alpha\right] + \frac{2\gamma^2}{T} <0
	\end{aligned}
\end{equation}
which, with $\gamma>0,$ simplifies to the stated stability condition given by 
\begin{equation}\label{stabilityconditiontwo}
	\gamma / 2+\beta+T \alpha > 1 / T.
\end{equation}

This implies that the specific system  
\begin{equation}\label{mainmatrixsystemdetertwo}
	d {\mathcal{S}}(t)=A \mathcal{S}(t) dt 
\end{equation}
with the matrix $A,$ as defined in \eqref{matrixa}, and
\begin{align*}
	\mathcal{S}(t) :=\left( s_{1}(t),  \dot s_{1}(t),  s_{2}(t),  \dot s_{2}(t), \ldots,  s_{N}(t),  \dot s_{N}(t)\right)^T
\end{align*}
is stable if $\gamma>0$ and if the condition \eqref{stabilityconditiontwo} is satisfied.     Then, the Proposition \ref{teschlprop} implies that the eigenvalues $\Re\big(\lambda^{(l)}_k\big)$ of the matrix $A$ necessarily satisfy the condition
\begin{equation}
	\begin{aligned}
		\Re\big(\lambda^{(l)}_k\big)\leq 0,\qquad\text{for all $k=0,\ldots,N-1$ and $l=1,2$}
	\end{aligned}
\end{equation}
as otherwise the solutions to this specifically constructed system would blow up too. This completes the proof of Theorem \ref{thmmatrixa}.

\begin{remark} Assuming the \textit{optimal velocity} function constant, i.e., $F(x)=v_{\mathcal H}$ (constant input control port), we obtain the characteristic equation
\begin{equation}\label{characteristictwo}
	\begin{aligned}
		&\lambda^2+\gamma\lambda+\beta \lambda\left(1-e^{i \theta_k}\right)+\alpha\left(2-e^{i \theta_k}-e^{-i \theta_k}\right)=0.
	\end{aligned}
\end{equation}
The model parameters read
\begin{equation}\label{equ:parameterabtwo}
	\left\{ ~\begin{aligned}
		\mu_\theta &= \beta(1-c_\theta) + \gamma,\\      
		\sigma_\theta &=-\beta s_\theta,                              
	\end{aligned} \right. \qquad 
	\left\{ ~\begin{aligned}
		\nu_\theta &=2\alpha(1-c_\theta),\\
		\rho_\theta &=0.
	\end{aligned} \right.
\end{equation}
The stability conditions \eqref{determinantcondition} are given by
\begin{equation}
    \beta(1-c_\theta) + \gamma>0\quad\text{and}\quad 2 \alpha\left(1-c_\theta\right)(\beta(1-c_\theta)+\gamma)^2>0,
\end{equation}
that holds for all $c_\theta\in[-1,1[$ if 
\begin{equation}
    \alpha(\beta+\gamma)>0.
\end{equation}
Here again, the Hamiltonian component $\alpha$ allows for system stabilisation.
\end{remark}

\section{Simulation results \label{sim}}

In the following, we present simulation results with fifty vehicles on a one kilometre long segment $L$ with periodic boundary conditions. 
The \textit{optimal velocity} function has the affine form 
$$
F(s)=\frac{s-\ell}T,
$$
with vehicle length $\ell=5$\;m and desired time gap $T=1$\;s. 
The relaxation rates are $\gamma=1$\;s$^{-1}$ and $\beta=0.5$\;s$^{-1}$. 
The noise amplitude is $\sigma=5$\;m\,s$^{-3/2}$.
The potential $V$ has the quadratic form 
$$
V(x)=\frac12\alpha x^2, \qquad \alpha\ge0.
$$
The simulations are carried out using an explicit/implicit Euler-Maruyama numerical scheme \cite{kloeden2011numerical}.
With $\delta t$ as the time step, the numerical scheme reads for all vehicles $n\in\{1,\ldots,N\}$
\begin{equation}
   \begin{cases}
    ~dq_n(t+\delta t)=q_(t)+\delta t p_n(t),\\[2.5mm]
    ~dp_n(t+\delta t)=p_n(t)+\delta t\gamma \left[\frac1T\big(q_{n+1}(t)-q_n(t)-\ell)\big)-p_n(t)\right]\\[1.5mm]
    \qquad+\;\delta t\left[\beta \big(p_{n+1}(t)-p_n(t)\big)+\alpha\big(Q_n(t)-Q_{n-1}(t)\big)\right]+\sqrt{\delta t}\,\xi_n(t),
    \end{cases}
    \label{modnsim}
\end{equation}
where $\xi_n(k\delta t)$, $k\in\mathbb N$, are independent, normal random variables.
We set the time step in the simulation to $\delta t=0.01$\;s . 
We repeat one hundred independent Monte Carlo simulations from uniform initial conditions and measure the system performance after a simulation time of 5e4 simulation steps. 
Such a setting allows the observation of stationary behaviour.
Different values for the Hamiltonian component $\alpha$ ranging from 0 to 1~s$^{-2}$ are tested.
Note that the stability condition 
$$
\gamma/2+\beta+\alpha T>1/T
$$
is critical if $\alpha=0$ and systematically holds as soon as $\alpha>0$. 

Figure~\ref{fig:Traj} shows illustrative examples of vehicle trajectories according to $\alpha\in\{0,0.05,0.1,0.2,0.5,1\}$\;s$^{-2}$. 
Although the system is systematically stable, we can observe stop-and-go dynamics when the parameter setting is close to the critical one (i.e., when $\alpha$ is close to zero). 
Interestingly, the Hamiltonian component, quantified by the parameter $\alpha$,  allows the system to stabilise. 
This demonstrates the benefits of using the distance to the follower in complement to the distance to the predecessor in the interaction. 
Note that similar behaviour can be obtained by increasing the other relaxation rates of the model $\beta$ or $\gamma$. 
However, high relaxation rates can lead to strong accelerations and uncomfortable behaviour. 
By taking into account the distance to the follower, the model offers a new degree of freedom for speed control in following situations. 
In addition, the Hamiltonian component does not affect the equilibrium velocity of the flow, allowing for increased flow stability without reducing the flow performance.

When the parameter $\alpha$, which determines the strength of the Hamiltonian, is high, there is a significant reduction in the total energy of the system.
The energy is measured thanks to the Hamiltonian $\tilde H$ of the perturbed system \eqref{uniformconfigtwo} given by
\begin{equation}
    \tilde H(t) := H(x(t),y(t)) = \frac12 \sum_{n=1}^N y_n^2(t) + \sum_{n=1}^N V(x_{n+1}(t)-x_n(t)),
\end{equation}
where $V(x)=\alpha x^2/2$, $\alpha\ge0$, is the quadratic potential. 
The energy $\tilde H$ tends to a minimum value as $\alpha$ increases, see Fig.~\ref{fig:Hamiltonian}. 
Some smoothing of the vehicle speed autocorrelation function can be observed in Fig.~\ref{fig:Autocorr}. 
This feature corroborates the dissipation of stop-and-go waves and energy in the dynamics for large $\alpha$ observed in Figs.~\ref{fig:Traj} and \ref{fig:Hamiltonian}.

\begin{figure}[!ht]
\begin{center}\medskip
    \SpaceTime{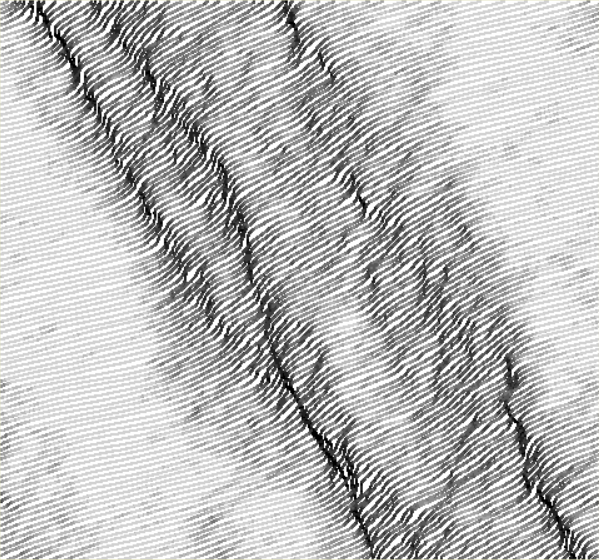}{0}{~}$\quad$\SpaceTime{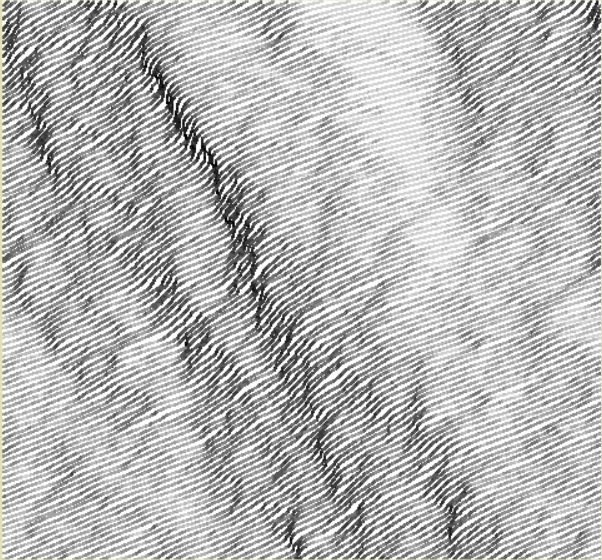}{0.05}{~s$^{-2}$}$\quad$\SpaceTime{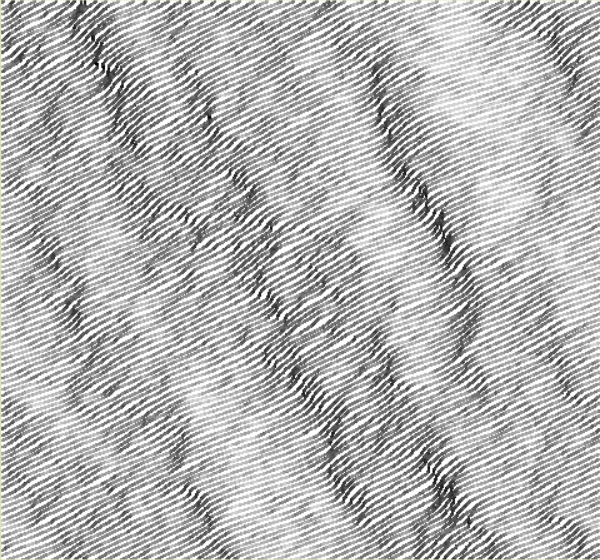}{0.1}{~s$^{-2}$}\\[7mm]\SpaceTime{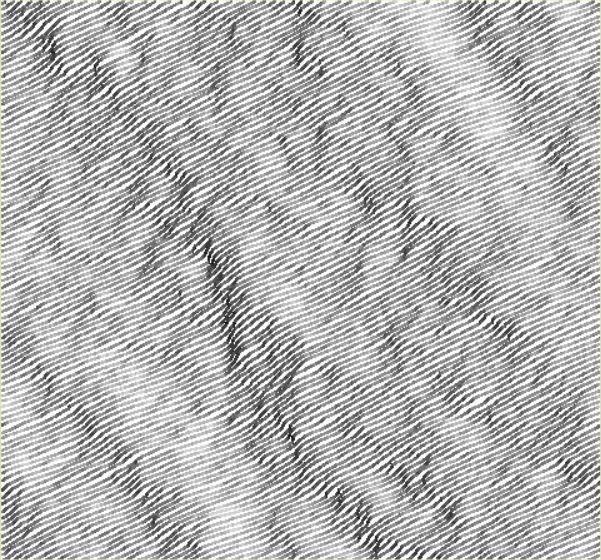}{0.2}{~s$^{-2}$}$\quad$\SpaceTime{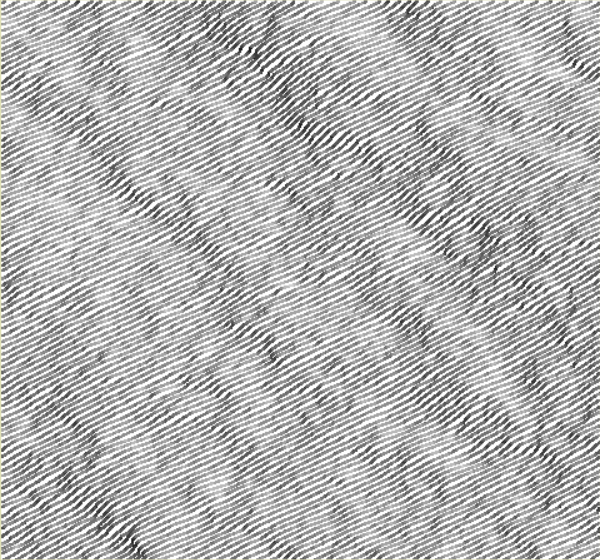}{0.5}{~s$^{-2}$}$\quad$\SpaceTime{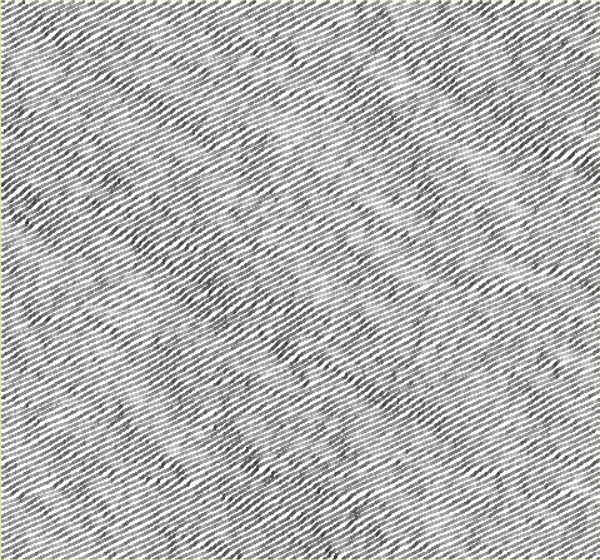}{1}{~s$^{-2}$}\bigskip
    \caption{Examples of trajectories over 120~s for 50 vehicles on a single-lane roundabout of 1 km length in stationary state, with $\alpha$ ranging between 0 and 1~s$^{-2}$. The grey intensity of the trajectories reflects the speed of the vehicles. Stop-and-go dynamics moving upstream dissipate as the Hamiltonian component $\alpha$ increases.}
    \label{fig:Traj}
\end{center}
\end{figure}

\begin{figure}[!ht]
\begin{center}\bigskip
    {\small\input{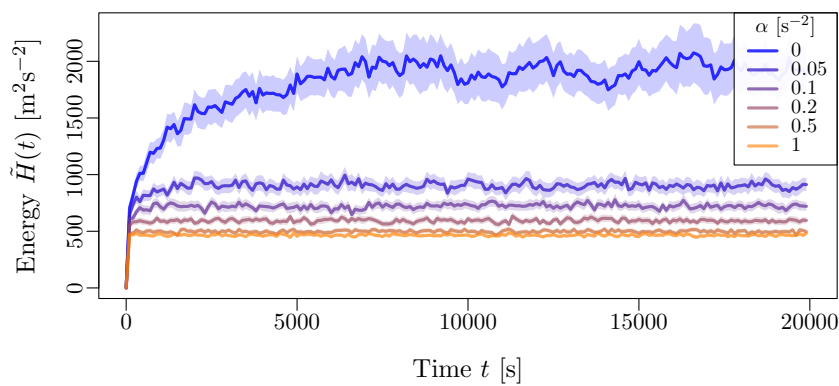}}
    \caption{Mean Hamiltonian behaviour with 95\% normal confidence interval averaged over 100 independent simulations for $\alpha$ between 0 and 1~s$^{-2}$. The total energy in the system decreases as $\alpha$ increases.}
    \label{fig:Hamiltonian}
\end{center}
\end{figure}

\begin{figure}[!ht]
\begin{center}\bigskip
    {\small\input{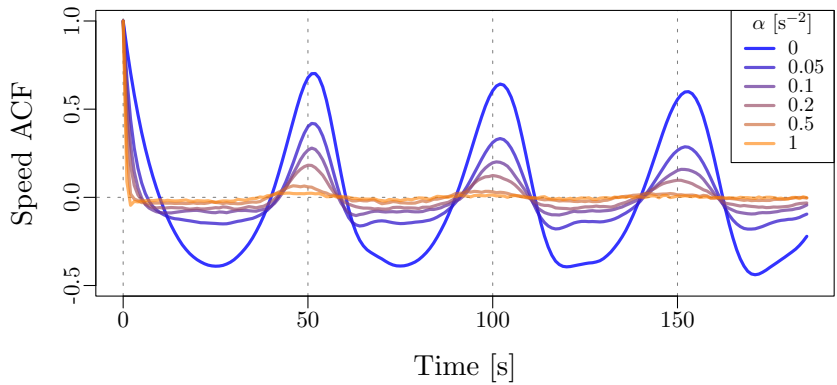}}
    \caption{Speed autocorrelation functions (ACF) for $\alpha$ ranging between 0 and 1~s$^{-2}$. Increasing the Hamiltonian component $\alpha$ leads to a smoothing of the speed ACF, confirming the dissipation of the stop-and-go dynamics.}
    \label{fig:Autocorr}
\end{center}
\end{figure}

\section{Summary and conclusion\label{ccl}}

We formulate in this article a general class of car-following models with a port-Hamiltonian structure. 
The model extends the well-known \textit{optimal velocity model} \cite{bando1995dynamical} and \textit{full velocity difference model} \cite{jiang2001full} by including the distance to the follower in the interaction. 
We establish sufficient stability conditions for the finite linear system with periodic solutions. 
The stability condition is general and covers the conditions of the classical models. 
We show that the stable system is ergodic and has a unique Gaussian invariant measure.
Simulation results point out that, even when the system is stable, it describes stop-and-go dynamics for parameter settings close to critical values. 
Interestingly, the Hamiltonian component stabilises the dynamics, confirming the advantages of using the distance to the follower as a complement to the distance to the predecessor in the interaction. 
In addition to the stabilisation, we observe a significant reduction in the energy of the system as the Hamiltonian component increases, as well as a smoothing of vehicle speed autocorrelation functions, attesting to the dissipation of the stop-and-go waves. 

By taking into account the distance to the follower, the port-Hamiltonian model provides a new degree of freedom to control the speed in following situations, which proves beneficial for the stabilisation of the system. 
The distance to the follower can be measured directly using radar or lidar sensors, making the modelling approach potentially applicable to autonomous cruise control systems. 
Furthermore, the Hamiltonian component does not affect the equilibrium velocity of the flow.
This feature allows for increased flow stability without reducing flow performance. 
System stability can be further improved using nonlinear interaction potentials $V$ and optimal velocity functions $F$. 
In this case, the skew symmetric and dissipation structures of the pHS remain linear. 
Only the input control matrix $G$ and the potential function $V$ are nonlinear. 
These investigations will be the subject of future work.

\subsection*{Acknowledgements}

We would like to thank Thomas Kruse for interesting discussions and useful comments.

\subsection*{Conflict of interest}

All authors declare no conflicts of interest in this paper.

\subsection*{References}

\bibliographystyle{unsrt}

\begin{thebibliography}{10}

\bibitem{Pipes1953}
Louis~A. Pipes.
\newblock An operational analysis of traffic dynamics.
\newblock {\em Journal of Applied Physics}, 24(3):274--281, 1953.

\bibitem{Chandler1958}
Robert~E. Chandler, Robert Herman, and Elliott~W. Montroll.
\newblock Traffic dynamics: Studies in car following.
\newblock {\em Operational Research}, 6(2):165--184, 1958.

\bibitem{Herman1959}
Robert Herman, Elliott~W. Montroll, Renfrey~B. Potts, and Richard~W. Rothery.
\newblock Traffic dynamics: analysis of stability in car following.
\newblock {\em Operational Research}, 7(1):86--106, 1959.

\bibitem{gazis1961nonlinear}
Denos~C Gazis, Robert Herman, and Richard~W Rothery.
\newblock Nonlinear follow-the-leader models of traffic flow.
\newblock {\em Operational Research}, 9(4):545--567, 1961.

\bibitem{bando1995dynamical}
Masako Bando, Katsuya Hasebe, Akihiro Nakayama, Akihiro Shibata, and Yuki
  Sugiyama.
\newblock Dynamical model of traffic congestion and numerical simulation.
\newblock {\em Physical Review E}, 51(2):1035, 1995.

\bibitem{Orosz2009}
G{\'a}bor Orosz, R.~Eddie Wilson, R{\'o}bert Szalai, and G{\'a}bor
  St{\'e}p{\'a}n.
\newblock Exciting traffic jams: Nonlinear phenomena behind traffic jam
  formation on highways.
\newblock {\em Physical Review E}, 80(4):046205, 2009.

\bibitem{Orosz2010}
G{\'a}bor Orosz, R.~Eddie Wilson, and G{\'a}bor St{\'e}p{\'a}n.
\newblock Traffic jams : dynamics and control.
\newblock {\em Proceedings of the Royal Society A}, 368(1957):4455--4479, 2010.

\bibitem{NagelS92A}
Kai Nagel and Michael Schreckenberg.
\newblock A cellular automaton model for freeway traffic.
\newblock {\em Journal de Physique I}, 2:2221, 1992.

\bibitem{barlovic1998metastable}
Robert Barlovic, Ludger Santen, Andreas Schadschneider, and Michael
  Schreckenberg.
\newblock Metastable states in cellular automata for traffic flow.
\newblock {\em European Physical Journal B}, 5(3):793--800, 1998.

\bibitem{treiber2009hamilton}
Martin Treiber and Dirk Helbing.
\newblock Hamilton-like statistics in onedimensional driven dissipative
  many-particle systems.
\newblock {\em The European Physical Journal B}, 68:607--618, 2009.

\bibitem{Hamdar2015}
Samer~H. Hamdar, Hani~S. Mahmassani, and Martin Treiber.
\newblock From behavioral psychology to acceleration modeling: Calibration,
  validation, and exploration of drivers’ cognitive and safety parameters in
  a risk-taking environment.
\newblock {\em Transportation Research Part B: Methodological}, 78:32--53,
  2015.

\bibitem{tordeux2016white}
Antoine Tordeux and Andreas Schadschneider.
\newblock White and relaxed noises in optimal velocity models for pedestrian
  flow with stop-and-go waves.
\newblock {\em Journal of Physics A: Mathematical and Theoretical},
  49(18):185101, 2016.

\bibitem{treiber2017intelligent}
Martin Treiber and Arne Kesting.
\newblock The intelligent driver model with stochasticity-new insights into
  traffic flow oscillations.
\newblock {\em Transportation Research Procedia}, 23:174--187, 2017.

\bibitem{wang2020stability}
Yu~Wang, Xiaopeng Li, Junfang Tian, and Rui Jiang.
\newblock Stability analysis of stochastic linear car-following models.
\newblock {\em Transportation Science}, 54(1):274--297, 2020.

\bibitem{friesen2021spontaneous}
Martin Friesen, Hanno Gottschalk, Barbara R{\"u}diger, and Antoine Tordeux.
\newblock Spontaneous wave formation in stochastic self-driven particle
  systems.
\newblock {\em SIAM Journal on Applied Mathematics}, 81(3):853--870, 2021.

\bibitem{ngoduy2019langevin}
Dong Ngoduy, Seunghyeon Lee, Martin Treiber, Mehdi Keyvan-Ekbatani, and Hai~L
  Vu.
\newblock Langevin method for a continuous stochastic car-following model and
  its stability conditions.
\newblock {\em Transportation Research Part C: Emerging Technologies},
  105:599--610, 2019.

\bibitem{xu2019analysis}
Tu~Xu and Jorge~A Laval.
\newblock Analysis of a two-regime stochastic car-following model: Explaining
  capacity drop and oscillation instabilities.
\newblock {\em Transportation Research Record}, 2673(10):610--619, 2019.

\bibitem{Stern2018}
Raphael~E. Stern, Shumo Cui, Maria~Laura {Delle Monache}, Rahul Bhadani, Matt
  Bunting, Miles Churchill, Nathaniel Hamilton, R’mani Haulcy, Hannah
  Pohlmann, Fangyu Wu, Benedetto Piccoli, Benjamin Seibold, Jonathan Sprinkle,
  and Daniel~B. Work.
\newblock Dissipation of stop-and-go waves via control of autonomous vehicles:
  Field experiments.
\newblock {\em Transportation Research Part C: Emerging Technologies},
  89:205--221, 2018.

\bibitem{gunter2020commercially}
George Gunter, Derek Gloudemans, Raphael~E Stern, Sean McQuade, Rahul Bhadani,
  Matt Bunting, Maria~Laura Delle~Monache, Roman Lysecky, Benjamin Seibold,
  Jonathan Sprinkle, et~al.
\newblock Are commercially implemented adaptive cruise control systems string
  stable?
\newblock {\em IEEE Transactions on Intelligent Transportation Systems},
  22(11):6992--7003, 2020.

\bibitem{makridis2021openacc}
Michail Makridis, Konstantinos Mattas, Aikaterini Anesiadou, and Biagio Ciuffo.
\newblock Openacc. an open database of car-following experiments to study the
  properties of commercial acc systems.
\newblock {\em Transportation Research Part C: Emerging Technologies},
  125:103047, 2021.

\bibitem{CIUFFO2021}
Biagio Ciuffo, Konstantinos Mattas, Michail Makridis, Giovanni Albano,
  Aikaterini Anesiadou, Yinglong He, Szilárd Josvai, Dimitris Komnos, Marton
  Pataki, Sandor Vass, and Zsolt Szalay.
\newblock Requiem on the positive effects of commercial adaptive cruise control
  on motorway traffic and recommendations for future automated driving systems.
\newblock {\em Transportation Research Part C: Emerging Technologies},
  130:103305, 2021.

\bibitem{treiber2006delays}
Martin Treiber, Arne Kesting, and Dirk Helbing.
\newblock Delays, inaccuracies and anticipation in microscopic traffic models.
\newblock {\em Physica A: Statistical Mechanics and its Applications},
  360(1):71--88, 2006.

\bibitem{wang2019effect}
Tao Wang, Guangyao Li, Jing Zhang, Shubin Li, and Tao Sun.
\newblock The effect of headway variation tendency on traffic flow: Modeling
  and stabilization.
\newblock {\em Physica A: Statistical Mechanics and Its Applications},
  525:566--575, 2019.

\bibitem{khound2021extending}
Parthib Khound, Peter Will, Antoine Tordeux, and Frank Gronwald.
\newblock Extending the adaptive time gap car-following model to enhance local
  and string stability for adaptive cruise control systems.
\newblock {\em Journal of Intelligent Transportation Systems}, pages 1--21,
  2021.

\bibitem{van2006port}
Arjan van~der Schaft.
\newblock Port-{H}amiltonian systems: an introductory survey.
\newblock In {\em Proceedings of the international congress of mathematicians},
  volume~3, pages 1339--1365. Citeseer, 2006.

\bibitem{van2014port}
Arjan van~der Schaft and Dimitri Jeltsema.
\newblock Port-{H}amiltonian systems theory: {A}n introductory overview.
\newblock {\em Foundations and Trends in Systems and Control}, 1(2-3):173--378,
  2014.

\bibitem{van1981symmetries}
Arjan van~der Schaft.
\newblock Symmetries and conservation laws for {H}amiltonian systems with
  inputs and outputs: A generalization of {Noether}'s theorem.
\newblock {\em Systems \& Control Letters}, 1(2), 1981.

\bibitem{MaschkeVdsBreedveld}
Bernhard Maschke, Arjan Van~Der Schaft, and Pieter~Cornelis Breedveld.
\newblock An intrinsic {H}amiltonian formulation of network dynamics:
  Non-standard poisson structures and gyrators.
\newblock {\em Journal of the Franklin Institute}, 329(5):923--966, 1992.

\bibitem{rashad2020twenty}
Ramy Rashad, Federico Califano, Arjan van~der Schaft, and Stefano Stramigioli.
\newblock Twenty years of distributed port-{H}amiltonian systems: a literature
  review.
\newblock {\em IMA Journal of Mathematical Control and Information},
  37(4):1400--1422, 2020.

\bibitem{knorn2015overview}
Steffi Knorn, Zhiyong Chen, and Richard~H Middleton.
\newblock Overview: Collective control of multiagent systems.
\newblock {\em IEEE Transactions on Control of Network Systems}, 3(4):334--347,
  2015.

\bibitem{wang2016output}
Bing Wang, Xinghu Wang, and Honghua Wang.
\newblock Output synchronization of multi-agent port-{H}amiltonian systems with
  link dynamics.
\newblock {\em Kybernetika}, 52(1):89--105, 2016.

\bibitem{cristofaro2022fault}
Andrea Cristofaro, Gaetano Giunta, and Paolo~Robuffo Giordano.
\newblock Fault-tolerant formation control of passive multi-agent systems using
  energy tanks.
\newblock {\em IEEE Control Systems Letters}, 2022.

\bibitem{van2010Consensus}
Arjan van~der Schaft and Bernhard Maschke.
\newblock Port-{H}amiltonian dynamics on graphs: {C}onsensus and coordination
  control algorithms.
\newblock {\em IFAC Proceedings Volumes}, 43(19):175--178, 2010.

\bibitem{chang2014protocol}
Li~Chang-Sheng and Wang Yu-Zhen.
\newblock Protocol design for output consensus of port-controlled {H}amiltonian
  multi-agent systems.
\newblock {\em Acta Automatica Sinica}, 40(3):415--422, 2014.

\bibitem{jafarian2015formation}
Matin Jafarian, Ewoud Vos, Claudio De~Persis, Arjan van~der Schaft, and
  Jacquelien~MA Scherpen.
\newblock Formation control of a multi-agent system subject to coulomb
  friction.
\newblock {\em Automatica}, 61:253--262, 2015.

\bibitem{WEI2017Consensus}
Jieqiang Wei, Anneroos~R.F. Everts, M.~Kanat Camlibel, and Arjan van~der
  Schaft.
\newblock Consensus dynamics with arbitrary sign-preserving nonlinearities.
\newblock {\em Automatica}, 83:226--233, 2017.

\bibitem{xue2019opinion}
Dong Xue, Sandra Hirche, and Ming Cao.
\newblock Opinion behavior analysis in social networks under the influence of
  coopetitive media.
\newblock {\em IEEE Transactions on Network Science and Engineering},
  7(3):961--974, 2019.

\bibitem{sharf2019analysis}
Miel Sharf and Daniel Zelazo.
\newblock Analysis and synthesis of mimo multi-agent systems using network
  optimization.
\newblock {\em IEEE Transactions on Automatic Control}, 64(11):4512--4524,
  2019.

\bibitem{matei2019inferring}
Ion Matei, Christos Mavridis, John~S. Baras, and Maksym Zhenirovskyy.
\newblock Inferring particle interaction physical models and their dynamical
  properties.
\newblock In {\em 2019 IEEE 58th Conference on Decision and Control (CDC)},
  pages 4615--4621. IEEE, 2019.

\bibitem{mavridis2020detection}
Christos~N. Mavridis, Nilesh Suriyarachchi, and John~S. Baras.
\newblock Detection of dynamically changing leaders in complex swarms from
  observed dynamic data.
\newblock In {\em International Conference on Decision and Game Theory for
  Security}, pages 223--240. Springer, 2020.

\bibitem{ma2021path}
Yan Ma, Jian Chen, Junmin Wang, Yanchuan Xu, and Yuexuan Wang.
\newblock Path-tracking considering yaw stability with passivity-based control
  for autonomous vehicles.
\newblock {\em IEEE Transactions on Intelligent Transportation Systems},
  23(7):8736--8746, 2021.

\bibitem{knorn2014passivity}
Steffi Knorn, Alejandro Donaire, Juan~C. Ag{\"u}ero, and Richard~H. Middleton.
\newblock Passivity-based control for multi-vehicle systems subject to string
  constraints.
\newblock {\em Automatica}, 50(12):3224--3230, 2014.

\bibitem{knorn2014scalability}
Steffi Knorn, Alejandro Donaire, Juan~C. Ag{\"u}ero, and Richard~H. Middleton.
\newblock Scalability of bidirectional vehicle strings with measurement errors.
\newblock {\em IFAC Proceedings Volumes}, 47(3):9171--9176, 2014.

\bibitem{dai2017safety}
Siyuan Dai and Xenofon Koutsoukos.
\newblock Safety analysis of integrated adaptive cruise control and lane
  keeping control using discrete-time models of port-{H}amiltonian systems.
\newblock In {\em 2017 American Control Conference (ACC)}, pages 2980--2985.
  IEEE, 2017.

\bibitem{dai2020safety}
Siyuan Dai and Xenofon Koutsoukos.
\newblock Safety analysis of integrated adaptive cruise and lane keeping
  control using multi-modal port-{H}amiltonian systems.
\newblock {\em Nonlinear Analysis: Hybrid Systems}, 35:100816, 2020.

\bibitem{bansal2021port}
Harshit Bansal, P~Schulze, Mohammad~Hossein Abbasi, Hans Zwart, Laura
  Iapichino, Wil~HA Schilders, and Nathan van~de Wouw.
\newblock Port-{H}amiltonian formulation of two-phase flow models.
\newblock {\em Systems \& Control Letters}, 149:104881, 2021.

\bibitem{clemente2002geometric}
Jes{\'u}s Clemente-Gallardo, Ricardo Lopezlena, and Jacquelien~M.A. Scherpen.
\newblock Geometric discretization of fluid dynamics.
\newblock In {\em Proceedings of the 41st IEEE Conference on Decision and
  Control, 2002.}, volume~4, pages 4185--4190. IEEE, 2002.

\bibitem{rashad2021port}
Ramy Rashad, Federico Califano, Frederic~P Schuller, and Stefano Stramigioli.
\newblock Port-{H}amiltonian modeling of ideal fluid flow: Part {I}.
  {F}oundations and kinetic energy.
\newblock {\em Journal of Geometry and Physics}, 164:104201, 2021.

\bibitem{rashad2021portb}
Ramy Rashad, Federico Califano, Frederic~P. Schuller, and Stefano Stramigioli.
\newblock Port-{H}amiltonian modeling of ideal fluid flow: Part {II}.
  {C}ompressible and incompressible flow.
\newblock {\em Journal of Geometry and Physics}, 164:104199, 2021.

\bibitem{Sugiyama2008}
Yuki Sugiyama, Minoru Fukui, Macoto Kikuchi, Katsuya Hasebe, Akihiro Nakayama,
  Katsuhiro Nishinari, Shin ichi Tadaki, and Satoshi Yukawa.
\newblock Traffic jams without bottlenecks. experimental evidence for the
  physical mechanism of the formation of a jam.
\newblock {\em New Journal of Physics}, 10(3):033001, 2008.

\bibitem{tordeux2014collision}
Antoine Tordeux and Armin Seyfried.
\newblock Collision-free nonuniform dynamics within continuous optimal velocity
  models.
\newblock {\em Physical Review E}, 90(4):042812, 2014.

\bibitem{Helly1959}
Walter Helly.
\newblock Simulation of bottlenecks in single lane traffic flow.
\newblock In {\em Proceedings of the Symposium on Theory of Traffic Flow},
  pages 207--238, 1959.

\bibitem{jiang2001full}
Rui Jiang, Qingsong Wu, and Zuojin Zhu.
\newblock Full velocity difference model for a car-following theory.
\newblock {\em Physical Review E}, 64(1):017101, 2001.

\bibitem{ortega2001putting}
Romeo Ortega, Arjan Van Der~Schaft, Iven Mareels, and Bernhard Maschke.
\newblock Putting energy back in control.
\newblock {\em IEEE Control Systems Magazine}, 21(2):18--33, 2001.

\bibitem{Bando1995}
Masako Bando, Katsuya Hasebe, Akihiro Nakayama, Akihiro Shibata, and Yuki
  Sugiyama.
\newblock Dynamical model of traffic congestion and numerical simulation.
\newblock {\em Physical Review E}, 51(2):1035--1042, 1995.

\bibitem{teschl}
Gerald Teschl.
\newblock {\em Ordinary differential equations and dynamical systems}, volume
  140 of {\em Graduate Studies in Mathematics}.
\newblock American Mathematical Society, Providence, RI, 2012.

\bibitem{DapratoZab92}
Giuseppe Da~Prato and Jerzy Zabczyk.
\newblock {\em Stochastic equations in infinite dimensions}, volume~44 of {\em
  Encyclopedia of Mathematics and its Applications}.
\newblock Cambridge University Press, Cambridge, 1992.

\bibitem{AA02}
Yuri~A. Abramovich and Charalambos~D. Aliprantis.
\newblock {\em An invitation to operator theory}, volume~50 of {\em Graduate
  Studies in Mathematics}.
\newblock American Mathematical Society, Providence, RI, 2002.

\bibitem{pratoZabzyc96}
Giuseppe Da~Prato and Jerzy Zabczyk.
\newblock {\em Ergodicity for infinite-dimensional systems}, volume 229 of {\em
  London Mathematical Society Lecture Note Series}.
\newblock Cambridge University Press, Cambridge, 1996.

\bibitem{GS02}
Alison~L. Gibbs and Francis~Edward Su.
\newblock On choosing and bounding probability metrics.
\newblock {\em International Statistical Review}, 70(3):419--435, 2002.

\bibitem{Klenke04}
Achim Klenke.
\newblock {\em Probability theory}.
\newblock Universitext. Springer, London, second edition, 2014.
\newblock A comprehensive course.

\bibitem{KS91}
Ioannis Karatzas and Steven~E. Shreve.
\newblock {\em Brownian motion and stochastic calculus}, volume 113 of {\em
  Graduate Texts in Mathematics}.
\newblock Springer-Verlag, New York, second edition, 1991.

\bibitem{F46}
Evelyn Frank.
\newblock On the zeros of polynomials with complex coefficients.
\newblock {\em Bulletin of the American Mathematical Society}, 52:144--157,
  1946.

\bibitem{Tordeux2012}
Antoine Tordeux, Michel Roussignol, and Sylvain Lassarre.
\newblock Linear stability analysis of fisrt-order delayed car-following models
  on a ring.
\newblock {\em Physical Review E}, 86(3):036207, 2012.

\bibitem{tordeux2017influence}
Antoine Tordeux, Mohcine Chraibi, Andreas Schadschneider, and Armin Seyfried.
\newblock Influence of the number of predecessors in interaction within
  acceleration-based flow models.
\newblock {\em Journal of Physics A: Mathematical and Theoretical},
  50(34):345102, 2017.

\bibitem{cordes2023single}
Jakob Cordes, Mohcine Chraibi, Antoine Tordeux, and Andreas Schadschneider.
\newblock Single-file pedestrian dynamics: a review of agent-following models.
\newblock {\em Crowd Dynamics, Volume 4: Analytics and Human Factors in Crowd
  Modeling}, pages 143--178, 2023.

\bibitem{kloeden2011numerical}
Peter~E. Kloeden and Eckhard Platen.
\newblock {\em Numerical Solution of Stochastic Differential Equations}.
\newblock Stochastic Modelling and Applied Probability. Springer Berlin
  Heidelberg, 2011.

\end{thebibliography}

\end{document}